\theoremstyle{plain} 
\newtheorem{theorem}{\indent\sc Theorem}[section]
\newtheorem{lemma}[theorem]{\indent\sc Lemma}
\newtheorem{corollary}[theorem]{\indent\sc Corollary}
\theoremstyle{definition} 
\newtheorem{definition}[theorem]{\indent\sc Definition}
\newtheorem{remark}[theorem]{\indent\sc Remark}
\newtheorem{example}[theorem]{\indent\sc Example}
\def\address#1#2{\begingroup
\noindent\parbox[t]{7.8cm}{%
\small{\scshape\ignorespaces#1}\par\vskip1ex
\noindent\small{\itshape E-mail address}%
\/: #2\par\vskip4ex}\hfill%
\endgroup}%
\title{{$HS_{\lowercase {r}}$-valued Gauss maps and umbilic spacelike
surfaces  of codimension two
}} 
\author{
%
\bigskip \\
\text{Dang Van Cuong and Doan The Hieu}\footnote{ The authors is supported in part by the National Foundation for Science and Technology Development, Vietnam (Grant No. 101.01.30.09).}}
\begin{document}

\maketitle

\footnote{ 
2000 \textit{Mathematics Subject Classification}.
Primary 00; Secondary 00.
}
\footnote{ 
\textit{Key words and phrases}.
Lorentz-Minkowski space, $\textbf n_r^{\pm}$-Gauss map, Umbilicity.
}


\begin{abstract}
To study  spacelike surfaces of codimension two in the Lorentz-Minkowski space $\Bbb R^{n+1}_1,$ we construct a pair of maps whose values are in $HS_r:=H_+^n(\textbf v,1)\cap \{x_{n+1}=r\},$  called $\textbf n_r^{\pm}$-Gauss maps. It is showed that they are well-defined and useful to study practically  flat as well as umbilic spacelike surfaces of codimension two in $\Bbb R^{n+1}_1.$

\end{abstract}
\section{Introduction} 

In classical differential geometry, the Gauss map plays an important role in the study of the behaviour  or geometric invariants of surfaces of codimension one. In the case of surfaces of codimension larger than one, Gauss map associated with some arbitrary normal field $\nu$ is considered. By that way, one can consider the second fundamental form associated with $\nu$ and study invariants or properties of surfaces, concerning to the concept of $\nu$-curvatures, that are dependent or independent on $\nu.$

In 1989, Marek Kossowski \cite{ko} used Gauss maps, whose values are in the lightcone, to study spacelike 2-surfaces in $\Bbb R^4_1,$ followed by Izumiya et. al. (see \cite{izu2}).
In 2004, Izumiya et. al.  \cite{izu1} used Gauss maps associated with a normal field $\nu$ to study $\nu$-umbilicity for spacelike surfaces of codimension two in Lorentz-Minkowski spaces.
Long before, in the study of minimal 2-surfaces in $\Bbb R^n,$ it is well-known that the mean curvature vector $\overrightarrow{H}$ does not depend on $\nu$ (see \cite{os}).

Motivated by these ideas, to study practically spacelike surfaces of codimension two in $\Bbb R_1^{n+1},$ we construct a kind of Gauss map whose values are in a hyperbolic space,  called $\textbf n_r^{\pm}$-Gauss maps.

 Let $M$ be a spacelike surface of codimension two in $\Bbb R^{n+1}_1.$ The normal plane of $M$ at $p\in M,$ denoted by $N_pM$  is a timelike 2-plane. We identify $N_pM$ with its image under the translation given by the vector $-p.$ Then, the intersection of $N_pM$ and  the hyperbolic space with center ${\bf v}=(0,0,\dots,0, -1)$ and radius $1$,  $H_+^n({\bf v},1)$, is a hyperbola. For a fixed $r>0$, the hyperplane $\{x_{n+1}=r\}$ meets this hyperbola exactly at two points, denoted by $\textbf n_r^{\pm}(p).$

This gives two differential maps $p\mapsto \textbf n_r^{\pm}(p),$ called $\textbf n_r^{\pm}$-Gauss maps. Their derivatives are self-adjoint, and hence we can define the $\textbf n_r^{\pm}$-Weingarten maps, $\textbf n_r^{\pm}$-Gauss-Kronecker curvatures, $\textbf n_r^{\pm}$-mean curvatures,  $\textbf n_r^{\pm}$-principal curvatures, $\textbf n_r^{\pm}$-flat points, $\textbf n_r^{\pm}$-umbilic points \ldots.

We use these maps to study the flatness and umbilicity for  spacelike surfaces of codimension
two in $\Bbb R^{n+1}_1.$

In this situation, some criteria for a spacelike surface to be flat or umbilic as well as examples of some kinds of flat and umbilic spacelike surfaces of codimension two are established. These examples show that we can use $\textbf n_r^{\pm}$-Gauss maps to study some properties of spacelike surfaces of codimension two practically.


\section{Prelimineries}

\subsection{The Lorentz-Minkowski space $\Bbb R^{n+1}_1$}
 The Lorentz-Minkowski space $\Bbb R^{n+1}_1$ is the $(n+1)$-dimensional vector space $\Bbb R^{n+1}=\{( x_1, x_2,\ldots, x_{n+1}): x_i\in \Bbb R, i=1,2,\ldots, n+1\}$ endowed the pseudo scalar product
$$\langle \textbf x, \textbf y\rangle=\sum_{i=1}^{n}x_iy_i-x_{n+1}y_{n+1},$$
where $\textbf x=( x_1, x_2,\ldots, x_{n+1}), \textbf y=(y_1, y_2, \ldots y_{n+1})\in \Bbb R^{n+1}.$
Since $\langle, \rangle$ is non-positive definite, $\langle \textbf x, \textbf x\rangle$ may be zero or negative. We say a nonzero vector $\textbf x\in \Bbb R^{n+1}_1$ spacelike, lightlike or timelike if  $\langle \textbf x, \textbf x\rangle>0$, $\langle \textbf x, \textbf x\rangle=0$ or $\langle \textbf x, \textbf x\rangle<0$, respectively.
If $\langle \textbf x, \textbf y\rangle=0,$ we say $\textbf x,\textbf y$ are pseudo-orthogonal.

The norm of a vector $\textbf x\in \Bbb R^{n+1}_1$, denoted by $\|\textbf x\|$, is defined by $\sqrt{|\langle \textbf x,\textbf x\rangle|}.$
For a nonzero vector $\textbf n\in \Bbb R^{n+1}_1$, a hyperplane with the pseudo normal $\textbf n$ is defined as
$$HP(\textbf n,c)=\{\textbf x\in\Bbb R^{n+1}_1 : \langle \textbf x,\textbf n\rangle=c,\  c\in\Bbb R\}.$$
The hyperplane is said to be spacelike, lightlike or timelike if $\textbf n$ is timelike, lightlike or spacelike, respectively.

It is easy to see that,  $HP(\textbf n,c)$ is spacelike if any vector $\textbf x\in HP(\textbf n,0)$ is spacelike; $HP(\textbf n,c)$ is lightlike if  $HP(n,0)$ is tangent to the lightcone and $HP(\textbf n,c)$ is timelike if $HP(\textbf n,0)$ contains timelike vectors.

In $\Bbb R^{n+1}_1,$ we have three kinds of pseudo-hyperspheres
\begin{enumerate}
\item $H^{n}(\textbf a,R)=\{\textbf x\in\Bbb R^{n+1}_1\ |\ \langle \textbf x-\textbf a,\textbf x-\textbf a\rangle=-R^2,\ R>0\}:$ the hyperbolic with center $\textbf a$ and radius $R;$
\item $S_1^{n}(\textbf a,R)=\{\textbf x\in\Bbb R^{n+1}_1\ |\ \langle \textbf x-\textbf a,\textbf x-\textbf a\rangle=R^2,\ R>0 \}:$ the de Sitter with center $\textbf a$ and radius  $R;$
 \item $LC(\textbf a)=\{\textbf x\in\Bbb R^{n+1}_1:  \langle \textbf x-\textbf a,\textbf x-\textbf a\rangle=0\}:$  the lightcone with vertex $\textbf a.$
\end{enumerate}
And we call

 $$H_+^{n}(\textbf a,R)=\{\textbf x\in H^{n}(\textbf a,R):\ x_{n+1}-a_{n+1}\geq 0 \}$$
  the hyperbolic space with center $\textbf a$ and radius $R.$


\subsection{The $\textbf{n}_r^{\pm}$-Gauss maps}
In this paper a surface  is always spacelike and is of codimension two in $\Bbb R_1^{n+1},$ unless otherwise stated. It is an embedding $\textbf X:U\to\Bbb R^{n+1}_1$ , where $U$ is an open domain in $\Bbb R^{n-1}.$ We often  identify $M=\textbf X(U)$ with $\textbf X.$


   In this section we introduce two concrete spacelike normal fields on a surface that are useful to study  the flatness and umbilicity, practically.

   The normal plane of $M$ at $p\in M,$ denoted by $N_pM,$  can be viewed as a timelike 2-plane passing the origin. The intersection of this plane and  the hyperbolic space with center $\text{\bf v}=(0,0,\dots,0,-1)$ and radius $1,$ $H_+^n(\text{\bf v},1)$ is a hyperbola. For a fixed $r>0$, the hyperplane $\{x_{n+1}=r\}$ meets this hyperbola exactly at two points, denoted by $\textbf {n}_r^{\pm}(p).$

 \begin{definition} The following maps
$$\begin{aligned} \textbf {n}^{\pm}_r: M&\to  HS_r:=H_+^n(\textbf v,1)\cap \{x_{n+1}=r\}\\
p&\mapsto \textbf {n}^{\pm}_r(p).
  \end{aligned} $$
  are called  $\textbf {n}^{\pm}_r$-Gauss maps.
 \end{definition}
The first property of  $\textbf {n}^{\pm}_r$-Gauss maps is
\begin{theorem}
The $\textbf{n}_r^{\pm}$-Gauss maps are smooth.
\end{theorem}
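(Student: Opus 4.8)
The plan is to reduce the statement to the smooth dependence on $p$ of the solutions of an explicit $2\times2$ algebraic system. I would first fix $p_0\in M$ and pass to a neighborhood $U_0$ on which the rank-two normal bundle admits a smooth orthonormal frame $\{e_1,e_2\}$ with $e_1$ spacelike and $e_2$ timelike, i.e. $\langle e_1,e_1\rangle=1$, $\langle e_2,e_2\rangle=-1$, $\langle e_1,e_2\rangle=0$. Such a frame exists: writing $\textbf v^N$ for the orthogonal projection of the fixed timelike vector $\textbf v=(0,\dots,0,-1)$ onto $N_pM$, the map $p\mapsto\textbf v^N$ is smooth, and because $M$ is spacelike the tangential part $\textbf v-\textbf v^N$ is spacelike, so $\langle \textbf v^N,\textbf v^N\rangle\le\langle \textbf v,\textbf v\rangle=-1<0$; hence $e_2:=\textbf v^N/\|\textbf v^N\|$ is a smooth unit timelike normal, and $e_1$ is produced from any smooth local normal frame by Lorentzian Gram--Schmidt.

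Next I would translate $N_pM$ to the origin and write a candidate vector of $N_pM$ as $\textbf n=a\,e_1+b\,e_2$. With the identities $\langle \textbf n,\textbf n\rangle=a^2-b^2$, $\langle \textbf n,\textbf v\rangle=n_{n+1}$ and $\langle \textbf v,\textbf v\rangle=-1$, the two conditions cutting out $HS_r$, namely $\langle \textbf n-\textbf v,\textbf n-\textbf v\rangle=-1$ and $n_{n+1}=r$, become
\begin{align*}
a^2-b^2&=2r,\\
\alpha\,a+\beta\,b&=r,
\end{align*}
where $\alpha=\langle e_1,\textbf v\rangle$ and $\beta=\langle e_2,\textbf v\rangle$ depend smoothly on $p$. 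Since $\textbf v^N=\alpha\,e_1-\beta\,e_2$ gives $\alpha^2-\beta^2=\langle \textbf v^N,\textbf v^N\rangle\le-1$, we get $\beta^2-\alpha^2\ge1$; in particular $\beta\neq0$.

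From here I would eliminate $b=(r-\alpha a)/\beta$ and reduce to the quadratic
$$(\beta^2-\alpha^2)\,a^2+2r\alpha\,a-(r^2+2r\beta^2)=0,$$
whose leading coefficient is $\ge1$. The step I expect to demand the most care is the sign of the discriminant: a direct computation gives it, up to the factor $4$, as $r\beta^2\bigl(r+2(\beta^2-\alpha^2)\bigr)$, which is strictly positive because $r>0$, $\beta^2\ge1$ and $\beta^2-\alpha^2\ge1$. Consequently $\sqrt{\text{disc}}$ is smooth and nowhere zero on $U_0$, the two roots $a_\pm$ are distinct and smooth, $b_\pm=(r-\alpha a_\pm)/\beta$ is smooth, and the half-space condition $x_{n+1}=r>-1$ holds for both (this also confirms that the hyperbola meets $\{x_{n+1}=r\}$ in exactly two points). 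Because the discriminant stays bounded away from $0$ the two branches never coincide, so they admit a consistent smooth labeling; hence $\textbf n_r^{\pm}(p)=a_\pm(p)\,e_1(p)+b_\pm(p)\,e_2(p)$ is a smooth $\Bbb R^{n+1}_1$-valued map on $U_0$, and since $p_0$ was arbitrary the $\textbf n_r^{\pm}$-Gauss maps are smooth.
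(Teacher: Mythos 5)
Your proof is correct, and it takes a genuinely different route from the paper's. The paper works directly in ambient coordinates: it characterizes $\textbf n_r^{\pm}(p)$ locally as the solutions $\textbf a=(a_1,\dots,a_n,r)$ of the system $\langle \textbf X_{u_i},\textbf a\rangle=0$, $i=1,\dots,n-1$, together with $\langle \textbf a-\textbf v,\textbf a-\textbf v\rangle=-1$; the rank condition $\text{rank}(\textbf X_{u_1},\dots,\textbf X_{u_{n-1}})=n-1$ lets it express $a_1,\dots,a_{n-1}$ linearly in terms of $a_n$, and substitution yields a quadratic in $a_n$, of which the paper simply asserts that it ``has exactly two solutions and of course they are smooth.'' You instead trivialize the normal bundle by a smooth pseudo-orthonormal frame $\{e_1,e_2\}$ (built from the normal projection $\textbf v^N$, which you correctly show is timelike because the tangential part of $\textbf v$ is spacelike or zero) and reduce to an explicit $2\times 2$ system in the frame coefficients. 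The main thing your version buys is a verification of what the paper leaves implicit: you check that the leading coefficient satisfies $\beta^2-\alpha^2\ge 1$ and that the discriminant equals $4r\beta^2\bigl(r+2(\beta^2-\alpha^2)\bigr)>0$, which simultaneously proves that $\{x_{n+1}=r\}$ meets the hyperbola in exactly two distinct points (the well-definedness of the maps, stated without proof in the paper's construction) and that the two roots are smooth and never collide, so a locally consistent labeling of the branches exists; your computations here are correct, as is the observation that $x_{n+1}=r>-1$ places both points on the upper branch $H_+^n(\textbf v,1)$. The paper's argument is shorter and frame-free, but its concluding ``of course'' silently assumes exactly this nondegeneracy. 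Both proofs share the same mild vagueness about which intersection point is globally labeled $+$ and which $-$; your remark that the discriminant is bounded away from zero settles this locally, which is all either argument actually needs.
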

 \begin{proof}
  Locally, $\textbf {n}^{\pm}_r(p)$ are the solutions of the following system of equations
$$\begin{cases}
\langle \textbf X_{u_i},\textbf a\rangle&=0,\ \ i=1,2,\ldots, n-1;\\
 \langle \textbf a-\textbf v, \textbf a-\textbf v\rangle&=-1;\\

          \end{cases}$$
where $\textbf a=(a_1, a_2, \ldots,a_n, r).$

Since $\text{rank} (\textbf X_{u_1}, \textbf X_{u_2},\ldots, \textbf X_{u_{n-1}})=n-1,$ we can assume that  $a_1, a_2, \ldots,a_{n-1}$ are linearly expressed in term of $a_n.$ Substituting these to the last equation, we get a quadratic equation in term of $a_n.$ This equation has exactly two solutions and of course they are smooth.
\end{proof}

From now on, the symbol `` * '' means `` + '' or `` - '',
 unless otherwise stated.

The derivative of  $\textbf n_{r}^{*}$ at $p$
$$d\textbf n_r^{*}(p)\ :\ T_pM\rightarrow T_{\textbf n_{r}^{*}(p)}H_+^n(\textbf v,1)\subset T_pM\oplus N_pM;$$
 can be writen as
$$d\textbf n_r^{*}(p)=d{\textbf n_r^{*}}^T(p)+d{\textbf n_r^{*}}^N(p),$$
where
$d{\textbf n_r^{*}}^T$ and $d{\textbf n_r^{*}}^N$ are the tangent and normal components of $d{\textbf n_r^{*}},$ respectively.

We recall some definitions and facts concerning to $\nu$-umbilic (see \cite{izu1}) but restated for $\textbf n_r^{*}.$
Denoted by
\begin{enumerate}
\item $A_p^{\textbf n_r^{*}}:=-d{\textbf n_r^{*}}^T(p),$  the $\textbf n_r^{*}$-Weingarten map of $M$  at $p;$
\item $K_p^{\textbf n_r^{*}}:=\det(A_p^{\textbf n_r^{*}}),$ the $\textbf n_r^{*}$-Gauss-Kronecker curvature of $M$  at $p;$
\item $H^{\textbf n_r^{*}}_p:=\frac{1}{n-1}\text{tr}(A_p^{\textbf n_r^{*}}),$  the $\textbf n_r^{*}$-mean curvature  of $M$  at $p;$
\item $k_1^{\textbf n_r^{*}}(p),k_2^{\textbf n_r^{*}}(p),\dots,k_{n-1}^{\textbf n_r^{*}}(p),$ (the eigenvalues   of $A_p^{\textbf n_r^{*}}$) the $\textbf n_r^{*}$-principal curvatures of $M$ at $p$.

\end{enumerate}
Of course
$$K_p^{\textbf n_r^{*}}=k_1^{\textbf n_r^{*}}(p)k_{2}^{\textbf n_r^{*}}(p)\dots k_{n-1}^{\textbf n_r^{*}}(p),$$
and
$$H_p^{\textbf n_r^{*}}=\frac 1{n-1}(k_1^{\textbf n_r^{*}}(p)+k_{2}^{\textbf n_r^{*}}(p)+\dots +k_{n-1}^{\textbf n_r^{*}}(p)).$$

We have some well-known facts.
\begin{enumerate}
\item  The $\text{\bf n}_r^{*}$-Weingarten map is self-adjoint.
\item  The $\text{\bf n}_r^{*}$-principal curvatures  $k_i^{\text{\bf n}_r^{*}}(p),i=1,2,\dots,n-1$ of $M$ at $p$  are the solutions of the following equation
\begin{equation} \label{principal} \det(b_{ij}^{\text{\bf n}_r^{*}}(p)-kg_{ij}(p))=0,\end{equation}
where  $b_{ij}^{\text{\bf n}_r^{*}}(p):=\langle \textbf X_{u_iu_j}(p),\text{\bf n}_r^{*}(p)\rangle,\ i,j=1,2,\dots,n-1 ,$ the coefficients of the $\text{\bf n}_r^{*}$-second fundamental form of $M$ at $p.$

\item $K_p^{\text{\bf n}_r^{*}}={\det(b_{ij}^{\text{\bf n}_r^{*}}(p))}.{\det(g_{ij}(p))}^{-1}.$
\end{enumerate}

\begin{definition}

\begin{enumerate}
\item A point $p\in M$ is said to be $\textbf n^{*}_r$-umbilic if $k_i^{\textbf n^{*}_r}(p)=k(p),\ i=1,2,\ldots, n-1.$ If $k(p)=0,$ then $p$ is called  $\textbf n^{*}_r$-flat.
\item $M$ is said to be $\textbf n^{*}_r$-umbilic ($\textbf n^{*}_r$-flat) if every point $p\in M$ is $\textbf n^{*}_r$-umbilic ($\textbf n^{*}_r$-flat).
\item $M$ is said to be totally umbilic (totally flat) if every point $p\in M$ is $\textbf n^{*}_{r}$-umbilic ($\textbf n^{*}_{r}$-flat) for every $r>0.$
    \end{enumerate}
\end{definition}

\section{The $\textbf n_r^{*}$- flatness}
We begin with a useful lemma.
\begin{lemma} \label{lem1} If $(\text{\bf n}_r^*)_{u_i}\in N_pM,$ where $i\in\{1,2,\ldots, n-1\},$  then $(\text{\bf n}_r^*)_{u_i}=0.$
\end{lemma}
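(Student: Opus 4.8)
The plan is to extract two linear constraints on $(\textbf n_r^*)_{u_i}$ from the definition of the Gauss map and then combine them with the timelike nature of the normal plane to force the vector to vanish.

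First I would record the two defining conditions that hold at every point: $\textbf n_r^*(p)\in N_pM$ and $\textbf n_r^*(p)\in H_+^n(\textbf v,1)\cap\{x_{n+1}=r\}$. A short computation with the pseudo scalar product, using $\langle\textbf n_r^*,\textbf v\rangle=r$ and $\langle\textbf v,\textbf v\rangle=-1$, shows that $\langle\textbf n_r^*,\textbf n_r^*\rangle=2r$ is constant along $M$. Differentiating this with respect to $u_i$ gives $\langle(\textbf n_r^*)_{u_i},\textbf n_r^*\rangle=0$; this is the first constraint, namely that $(\textbf n_r^*)_{u_i}$ is pseudo-orthogonal to $\textbf n_r^*$.

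Second, since $\textbf n_r^*$ has constant last coordinate $r$, its derivative $(\textbf n_r^*)_{u_i}$ has vanishing last coordinate. Hence $\langle(\textbf n_r^*)_{u_i},(\textbf n_r^*)_{u_i}\rangle$ reduces to a sum of squares of its first $n$ entries and is therefore $\geq 0$, with equality exactly when $(\textbf n_r^*)_{u_i}=0$; so $(\textbf n_r^*)_{u_i}$ is spacelike or zero. The final step then invokes the hypothesis. Because $\langle\textbf n_r^*,\textbf n_r^*\rangle=2r>0$, the vector $\textbf n_r^*$ is a spacelike direction inside the timelike $2$-plane $N_pM$, so its pseudo-orthogonal complement in $N_pM$ is a timelike line. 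By the assumption $(\textbf n_r^*)_{u_i}\in N_pM$ together with the first constraint $(\textbf n_r^*)_{u_i}\perp\textbf n_r^*$, the vector $(\textbf n_r^*)_{u_i}$ must lie on that timelike line, whence $\langle(\textbf n_r^*)_{u_i},(\textbf n_r^*)_{u_i}\rangle\leq 0$. Comparing with the previous paragraph forces $\langle(\textbf n_r^*)_{u_i},(\textbf n_r^*)_{u_i}\rangle=0$, and therefore $(\textbf n_r^*)_{u_i}=0$.

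I do not expect a serious obstacle here; the only point to handle with care is the signature bookkeeping, namely confirming that $\textbf n_r^*$ is genuinely spacelike (so that its orthogonal complement inside the timelike plane is a timelike line) and that a nonzero vector with vanishing final coordinate cannot be timelike. Both facts are immediate from the explicit form of the pseudo scalar product, so the argument is essentially a two-line consequence of the two constraints once they are in hand.
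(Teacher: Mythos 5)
Your proof is correct, and it closes the argument by a different decomposition than the paper. Both proofs share the same two constraints: that $\langle\textbf n_r^*,\textbf n_r^*\rangle=2r$ is constant (so differentiation gives $\langle(\textbf n_r^*)_{u_i},\textbf n_r^*\rangle=0$), and that $(\textbf n_r^*)_{u_i}$ has vanishing last coordinate (so it is spacelike or zero). The paper then works in the explicit basis $\{\textbf n_r^+,\textbf n_r^-\}$ of $N_pM$: since both basis vectors have last coordinate $r$, the hypothesis forces $(\textbf n_r^*)_{u_i}=\lambda(\textbf n_r^+-\textbf n_r^-)$, and if $\lambda\ne0$ the orthogonality relation yields $\langle\textbf n_r^+,\textbf n_r^+\rangle=\langle\textbf n_r^-,\textbf n_r^-\rangle=\langle\textbf n_r^+,\textbf n_r^-\rangle=2r$, making $\textbf n_r^+-\textbf n_r^-$ null, which contradicts the fact that a nonzero vector with zero last coordinate is spacelike. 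You replace this basis computation with the signature observation that the pseudo-orthogonal complement of the spacelike direction $\textbf n_r^*$ inside the timelike $2$-plane $N_pM$ is a timelike line, so $(\textbf n_r^*)_{u_i}$ is simultaneously timelike-or-zero and spacelike-or-zero, hence zero. Your route avoids the case split on $\lambda$ and does not need the second Gauss map at all, so it generalizes verbatim to any spacelike normal field of constant pseudo-norm whose last coordinate is constant; the paper's route is more computational but only manipulates explicit inner products of the two maps it has already constructed. The one input you rightly flagged — that the orthogonal complement of a spacelike line in a plane of signature $(1,1)$ is negative definite — is standard and holds because the induced metric on the timelike plane $N_pM$ is non-degenerate, so your argument is complete as written.
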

\begin{proof} We observe that, the last coordinate of $(\textbf n_r^*)_{u_i}$ is zero because the last coordinate of $\textbf n_r^*$ is constant. Therefore, since $\{\textbf n_r^+, \textbf n_r^-\}$ is a basis of $N_pM,$ we have
\begin{equation}\label{pt1}(\textbf n_r^*)_{u_i}=\lambda (\textbf n_r^+- \textbf n_r^-).\end{equation}

An easy calculation shows that $\langle \textbf n_r^*,\textbf n_r^*\rangle=2r.$ Therefore,

$$\langle (\textbf n_r^*)_{u_i},\textbf n_r^*\rangle=\lambda\langle \textbf n_r^+-\textbf n_r^-,\textbf n_r^*\rangle=0.$$

 If $\lambda\ne 0,$ then
 $$\langle \textbf n_r^+,\textbf n_r^+\rangle=\langle \textbf n_r^-,\textbf n_r^-\rangle=\langle \textbf n_r^+,\textbf n_r^-\rangle=2r. $$
And hence,
$$ \langle \textbf n_r^+-\textbf n_r^-,\textbf n_r^+-\textbf n_r^-\rangle=0,$$
a contradiction, because $\textbf n_r^+-\textbf n_r^-$ is a nonzero spacelike vector. Thus, $\lambda= 0,$ and the lemma is proved.
\end{proof}


\begin{theorem}\label{theoflat1}Let $M$ be a connected surface. The following statements are equivalent
\begin{enumerate}
\item there exists an $r>0,$\ $M$ is $\text{\bf n}_r^*$-flat;
\item there exists an $r>0,$\ $\text{\bf n}_r^*$  is constant;
\item there exists a spacelike vector $\text{\bf a}=(a_1,a_2,\dots,a_n, a_{n+1}), a_{n+1}\ne 0$ and a real number $c$ such that $M\subset HP(\text{\bf a},c).$
    \end{enumerate}
\end{theorem}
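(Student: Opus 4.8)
The plan is to prove the equivalences by establishing $(1)\Leftrightarrow(2)$ and $(2)\Leftrightarrow(3)$ separately. The equivalence $(1)\Leftrightarrow(2)$ is essentially a repackaging of Lemma \ref{lem1}. For $(1)\Rightarrow(2)$, I would note that $\textbf n_r^*$-flatness means every $\textbf n_r^*$-principal curvature vanishes; since $A_p^{\textbf n_r^*}$ is self-adjoint it must then be the zero operator, so $d{\textbf n_r^*}^T(p)=0$ and hence $(\textbf n_r^*)_{u_i}\in N_pM$ for each $i$. Lemma \ref{lem1} forces $(\textbf n_r^*)_{u_i}=0$, and connectedness of $M$ gives that $\textbf n_r^*$ is constant. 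The converse $(2)\Rightarrow(1)$ is immediate: a constant map has vanishing derivative, so $A_p^{\textbf n_r^*}=0$ and all principal curvatures are $0$.

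For $(2)\Rightarrow(3)$, suppose $\textbf n_r^*\equiv\textbf b$ is constant. Then $\textbf b\in N_pM$ for every $p$, so $\langle \textbf X_{u_i},\textbf b\rangle=0$; differentiating $\langle \textbf X,\textbf b\rangle$ along each $u_i$ shows this function is locally constant, hence equal to a constant $c$ on the connected $M$, i.e. $M\subset HP(\textbf b,c)$. Here $\textbf b$ is spacelike because $\langle \textbf b,\textbf b\rangle=2r>0$ (the computation recorded in the proof of Lemma \ref{lem1}), and its last coordinate is $r\ne0$, so $\textbf b$ plays the role of the vector $\textbf a$ demanded in $(3)$.

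The main step, and the one I expect to be the real obstacle, is $(3)\Rightarrow(2)$: producing a radius $r$ and a sign for which $\textbf n_r^*$ is genuinely constant. The idea is to hunt for the constant vector among the scalar multiples of $\textbf a$, which is legitimate since $M\subset HP(\textbf a,c)$ gives $\langle \textbf X_{u_i},\textbf a\rangle=0$, so $\textbf a\in N_pM$ for all $p$. Writing $A=\langle \textbf a,\textbf a\rangle>0$, I would impose $\langle t\textbf a-\textbf v,\,t\textbf a-\textbf v\rangle=-1$; using $\langle \textbf v,\textbf v\rangle=-1$ and $\langle \textbf a,\textbf v\rangle=a_{n+1}$ this reduces to $t(tA-2a_{n+1})=0$, whose nonzero root $t=2a_{n+1}/A$ yields a vector $\textbf b:=t\textbf a$ lying on $H_+^n(\textbf v,1)$ with last coordinate $r:=2a_{n+1}^2/A>0$. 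Thus $\textbf b$ is a constant point of $N_pM\cap H_+^n(\textbf v,1)\cap\{x_{n+1}=r\}$, and therefore equals $\textbf n_r^+(p)$ or $\textbf n_r^-(p)$ at every $p$.

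The one delicate point remaining is to guarantee that $\textbf b$ equals the same branch for all $p$. Since $\textbf n_r^+(p)\ne\textbf n_r^-(p)$ everywhere (the hyperbola meets $\{x_{n+1}=r\}$ in exactly two distinct points) and both maps are smooth as shown above, the subsets $\{p:\textbf b=\textbf n_r^+(p)\}$ and $\{p:\textbf b=\textbf n_r^-(p)\}$ are disjoint, closed, and cover $M$; connectedness then forces one of them to be all of $M$, so the corresponding $\textbf n_r^*$ is the constant $\textbf b$. This establishes $(2)$ and closes the chain of equivalences.
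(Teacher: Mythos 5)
Your proposal is correct and follows essentially the same route as the paper: $(1)\Leftrightarrow(2)$ via Lemma \ref{lem1} (using the self-adjointness of $A_p^{\textbf n_r^*}$ and $\langle \textbf n_r^*,\textbf n_r^*\rangle=2r$), then $(2)\Rightarrow(3)$ by differentiating $\langle \textbf X,\textbf n_r^*\rangle$, and $(3)\Rightarrow(2)$ by rescaling $\textbf a$ to land on $H_+^n(\textbf v,1)$. Your handling of $(3)\Rightarrow(2)$ is in fact slightly more careful than the paper's one-line claim that ``we can choose the constant vector $\textbf n_r^*=2a_{n+1}\textbf a$'' (stated only for unit $\textbf a$): you make the scaling $t=2a_{n+1}/\langle \textbf a,\textbf a\rangle$ explicit and add the connectedness argument showing the constant vector agrees with a single branch $\textbf n_r^+$ or $\textbf n_r^-$ on all of $M$, a detail the paper leaves implicit.
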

\begin{proof}

($1.\Rightarrow 2.$)\ Since  $M$ is $\textbf n_r^*$-flat, i.e. $A_p^{\textbf n_r^*}=0,$ we have
\begin{equation}\label{eq1}\langle \textbf X_{u_iu_j},\textbf n_r^*\rangle=-\langle \textbf X_{u_i},(\textbf n_r^*)_{u_j}\rangle=0,\ i,j=1,2,\dots,n-1.\end{equation}

But (\ref{eq1}) means that $(\textbf n_r^*)_{u_i}\in N_pM$ and hence $(\textbf n_r^*)_{u_i}=0, \ i=1,2,\dots,n-1$ by virtue of Lemma \ref{lem1}.

($2\Rightarrow 1$)\ Obviously.

($2.\Rightarrow 3.$)\
If $\textbf n_r^*$ is constant, then
$$\frac{\partial}{\partial u_i}\langle\textbf X,\textbf n_r^*\rangle=\langle\textbf X_{u_i},\textbf n_r^*\rangle-\langle\textbf X,(\textbf n_r^*)_{u_i}\rangle=0.$$
Thus $\textbf X\subset H(\textbf n_r^*, c),$ for some constant $c.$

($3.\Rightarrow 2.$)
If $M$ is contained in a timelike hyperplane with a unit spacelike normal vector $\textbf a=(a_1, a_2,\dots, a_n,a_{n+1}), a_{n+1}\ne 0$, then it is not hard to check  that we can choose the constant vector $\textbf n_r^*=2a_{n+1}\textbf a\in H^n_+(\text{\bf v},1).$
\end{proof}

\begin{remark}
 \begin{enumerate}
\item The Theorem \ref{theoflat1} is a necessary and sufficient condition for a surface belonging to a timelike hyperplane that does not contain the $x_{n+1}$-axis. For the case of surfaces belonging to a timelike hypersurface containing the $x_{n+1}$-axis, see Example \ref{exflat}.
\item A necessary and sufficient condition for a surface belonging to a lightlike hyperplane based on the totally lightlike flatness was established in \cite{izu3}.
\item A similar result with an assumption on parallelism of the normal field was given (\cite[Theorem 4.3]{izu1}).
    \end{enumerate}
\end{remark}

\begin{corollary}
Let $M$ be a connected surface and $\text{\bf n}_{r_1}^*\ne\text{\bf n}_{r_2}^*,$  i.e.   $r_1\ne r_2$ or $\text{\bf n}_{r_1}^*= \text{\bf n}_{r}^+,\ \ \text{\bf n}_{r_2}^*=\text{\bf n}_{r}^-$ for some fixed $r.$ If $M$ is both $\text{\bf n}_{r_1}^*$- and $\text{\bf n}_{r_2}^*$-flat, then $M$ is a part of a spacelike $(n-1)$-plane. In this cases, $\text{\bf n}_r^{*}$ are constant  for every $r>0$ or equivalently, $M$ is totally flat, i.e. $\text{\bf n}_r^{*}$-flat  for every $r>0.$
\end{corollary}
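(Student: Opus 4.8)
The plan is to reduce the whole statement to the trichotomy of Theorem \ref{theoflat1}, which says that $\mathbf{n}_r^*$-flatness is the same as $\mathbf{n}_r^*$ being a constant spacelike vector and the same as $M$ lying in a timelike hyperplane whose pseudo-normal is (a multiple of) that constant. So the first move is to apply Theorem \ref{theoflat1} twice: from the hypotheses that $M$ is $\mathbf{n}_{r_1}^*$-flat and $\mathbf{n}_{r_2}^*$-flat, both maps $\mathbf{n}_{r_1}^*$ and $\mathbf{n}_{r_2}^*$ are constant, and by the implication $(2.\Rightarrow 3.)$ there are constants $c_1,c_2$ with $M\subset HP(\mathbf{n}_{r_1}^*,c_1)\cap HP(\mathbf{n}_{r_2}^*,c_2)$.

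The crux is to show that the two constant vectors $\mathbf{n}_{r_1}^*$ and $\mathbf{n}_{r_2}^*$ are linearly independent, so that the two timelike hyperplanes are genuinely distinct and non-parallel. Here I would use two facts already recorded in the proof of Lemma \ref{lem1}: the last coordinate of $\mathbf{n}_r^*$ is $r$ (since $\mathbf{n}_r^*\in HS_r\subset\{x_{n+1}=r\}$), and $\langle \mathbf{n}_r^*,\mathbf{n}_r^*\rangle=2r$. Suppose for contradiction $\mathbf{n}_{r_2}^*=\mu\,\mathbf{n}_{r_1}^*$. Comparing last coordinates gives $r_2=\mu r_1$, while comparing squared norms gives $2r_2=2\mu^2 r_1$. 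Since $r_1>0$, these force $\mu=\mu^2$, hence $\mu=1$ (the value $\mu=0$ is impossible because $\mathbf{n}_{r_2}^*$ has positive last coordinate $r_2$), and therefore $\mathbf{n}_{r_2}^*=\mathbf{n}_{r_1}^*$. This contradicts the standing hypothesis $\mathbf{n}_{r_1}^*\ne\mathbf{n}_{r_2}^*$, which by design covers both listed cases ($r_1\ne r_2$, and $r_1=r_2$ with opposite signs). Thus the two normals are independent, the two hyperplanes meet transversally, and their intersection is an affine $(n-1)$-plane $\Pi$ containing $M$.

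Next I would use a dimension count: $\dim M=n-1=\dim\Pi$ and $M\subset\Pi$, so $M$ is an open part of $\Pi$; in particular $T_pM$ equals the direction of $\Pi$. Since $M$ is spacelike, this common direction is spacelike, so $\Pi$ is a spacelike $(n-1)$-plane, which is the first assertion. Finally, for the ``totally flat'' conclusion, once $M$ lies in the fixed spacelike plane $\Pi$ the normal plane $N_pM$ is the fixed timelike $2$-plane pseudo-orthogonal to $\Pi$ and is independent of $p$; hence for every $r>0$ the two points $\mathbf{n}_r^{\pm}(p)=N_pM\cap HS_r$ do not vary with $p$, so $\mathbf{n}_r^*$ is constant for every $r$. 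Applying the equivalence $(2.\Leftrightarrow 1.)$ of Theorem \ref{theoflat1} for each $r$ then yields that $M$ is $\mathbf{n}_r^*$-flat for every $r>0$, i.e. totally flat.

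I expect the linear-independence step to be the only real obstacle; the rest is a direct application of Theorem \ref{theoflat1} together with elementary dimension counting. The one subtlety worth flagging is that transversality (not mere distinctness) of the hyperplanes is what guarantees a nonempty $(n-1)$-dimensional intersection, and it is exactly the independence of $\mathbf{n}_{r_1}^*$ and $\mathbf{n}_{r_2}^*$ that supplies it; parallel-but-distinct hyperplanes are anyway excluded since $M$ meets both.
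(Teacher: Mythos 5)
Your proof is correct and takes essentially the route the paper intends: the paper states this corollary with no proof at all, treating it as an immediate consequence of Theorem \ref{theoflat1} together with the linear independence of $\textbf{n}_{r_1}^*$ and $\textbf{n}_{r_2}^*$, which it later asserts ``by the construction'' in Section 4 without verification. Your explicit independence check via the last coordinate and the identity $\langle \textbf{n}_r^*,\textbf{n}_r^*\rangle=2r$, followed by the transversal-intersection dimension count and the constancy of $N_pM$ for the totally-flat conclusion, simply fills in the details the paper leaves implicit.
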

\begin{corollary}
If $M$ is connected and contained in a timelike hyperplane not containing the $x_{n+1}$-axis, then there exists a unique possitive real number $r$ such that $M$ is $\text{\bf n}_r^*$-flat unless $M$ is (or a part of) a spacelike $(n-1)$-plane.
\end{corollary}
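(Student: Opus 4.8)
The plan is to prove existence and uniqueness separately, leaning on Theorem \ref{theoflat1} for the former and on the preceding corollary for the latter. First I would normalize the spacelike pseudo-normal of the hyperplane so that $\langle \textbf a,\textbf a\rangle=1$, and observe that the hypothesis ``the hyperplane does not contain the $x_{n+1}$-axis'' is exactly the condition $a_{n+1}\ne 0$ appearing in statement $(3)$ of Theorem \ref{theoflat1}. Thus $M\subset HP(\textbf a,c)$ with $a_{n+1}\ne 0$, and the implication $(3)\Rightarrow(1)$ of that theorem produces an $r>0$ for which $M$ is $\textbf n_r^*$-flat. Tracing the construction used in $(3)\Rightarrow(2)$, the constant value of the Gauss map is $\textbf n_r^*=2a_{n+1}\textbf a$, whose last coordinate equals $2a_{n+1}^2$; hence the value that works is $r=2a_{n+1}^2$. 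A short computation using $\sum_{i=1}^n a_i^2=1+a_{n+1}^2$ confirms $\langle 2a_{n+1}\textbf a-\textbf v,\,2a_{n+1}\textbf a-\textbf v\rangle=-1$, so $2a_{n+1}\textbf a\in H_+^n(\textbf v,1)$; and since $M\subset HP(\textbf a,c)$ forces $\textbf a\perp T_pM$, i.e.\ $\textbf a\in N_pM$, this vector really is the relevant point of $HS_r$ and is constant along $M$.

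For uniqueness I would argue by contradiction. Suppose $M$ were flat for two distinct Gauss maps, $\textbf n_{r_1}^*\ne\textbf n_{r_2}^*$, meaning either $r_1\ne r_2$ or $r_1=r_2$ with opposite signs. The preceding corollary then forces $M$ to be a part of a spacelike $(n-1)$-plane, which is precisely the case excluded by the ``unless'' clause. Consequently, away from that degenerate case, the Gauss map rendering $M$ flat is unique; in particular the positive number $r$ is unique, in agreement with $r=2a_{n+1}^2$ being determined by the hyperplane alone, since the unit pseudo-normal $\textbf a$ is fixed up to sign and $a_{n+1}^2$ does not depend on that sign.

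The verifications involved — that $2a_{n+1}\textbf a$ lies on the hyperbolic space and has last coordinate $2a_{n+1}^2$ — are routine one-liners already implicit in the proof of Theorem \ref{theoflat1}. The only point demanding care is the reading of the hypothesis: ``does not contain the $x_{n+1}$-axis'' must be taken to mean $a_{n+1}\ne 0$, equivalently that the hyperplane is transverse to the direction $\textbf v$, since otherwise Theorem \ref{theoflat1} does not apply and no such $r$ need exist. Granting this reading, the corollary follows immediately from the two results already in hand, and I anticipate no substantive obstacle beyond this bookkeeping.
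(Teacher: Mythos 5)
Your proof is correct and is essentially the argument the paper intends (the corollary is stated without proof there): existence via Theorem \ref{theoflat1}, using the explicit constant $\textbf n_r^*=2a_{n+1}\textbf a$ from its proof of $(3.\Rightarrow 2.)$, which pins down $r=2a_{n+1}^2$, and uniqueness via the preceding corollary applied to two distinct flat Gauss maps. Your caveat that ``not containing the $x_{n+1}$-axis'' must be read as $a_{n+1}\ne 0$ (ruling out hyperplanes merely parallel to the axis, where no such $r$ exists) is a fair and accurate reading of the paper's own usage in the remark following Theorem \ref{theoflat1}.
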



\section{The $\text{\bf n}_r^*$-umbilicity}

In this section, we study the $\text{\bf n}_r^{*}$-umbilicity for  spacelike surfaces of codimension two in $\Bbb R_1^{n+1}.$ For a pseudo-hypersphere, we mean a hyperbolic or a de Sitter with center $\textbf a$ and radius $R,$ or a lightcone with vertex $\textbf a.$ Because $\textbf n_{r}^{\pm}$-umbilicity is an invariant under translations, we can assume that $\textbf a$ is the origin in the study of the $\text{\bf n}_r^{*}$-umbilicity for surfaces lying in a pseudo-hypersphere. We begin this section with another useful lemma.

\begin{lemma}\label{funlem}
Suppose that $\nu_1$ and $\nu_2$ are smooth normal fields on $M$ and for every $p\in M, \ \nu_1(p), \nu_2(p)$ are linear independent. If $M$ is both $\nu_1$- and $\nu_2$-umbilic then $M$  is $\nu$-umbilic for every smooth normal field $\nu.$
\end{lemma}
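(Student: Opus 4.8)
The plan is to reduce the statement about an arbitrary normal field $\nu$ to the two hypotheses by expressing $\nu$ in terms of the given frame $\{\nu_1,\nu_2\}$. Since $\nu_1(p),\nu_2(p)$ are linearly independent and $N_pM$ is $2$-dimensional, they form a basis of the normal plane at each point, so I can write $\nu = f\,\nu_1 + g\,\nu_2$ for smooth functions $f,g$ on $M$. The $\nu$-umbilicity is a condition on the $\nu$-second fundamental form, whose coefficients are $b_{ij}^{\nu}(p)=\langle \textbf X_{u_iu_j}(p),\nu(p)\rangle$; the point of the frame decomposition is that $b_{ij}$ depends \emph{linearly} on the normal argument.

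First I would record this linearity: from $\nu = f\nu_1 + g\nu_2$ I get
\begin{equation}\label{eq:linear}
b_{ij}^{\nu}(p) = f(p)\,b_{ij}^{\nu_1}(p) + g(p)\,b_{ij}^{\nu_2}(p),\qquad i,j=1,\dots,n-1.
\end{equation}
Next I translate the umbilicity hypotheses into matrix form using equation (\ref{principal}): $M$ being $\nu_1$-umbilic with common principal curvature $k_1$ means $b_{ij}^{\nu_1}(p)=k_1(p)\,g_{ij}(p)$ for all $i,j$, and likewise $b_{ij}^{\nu_2}(p)=k_2(p)\,g_{ij}(p)$ for some smooth function $k_2$. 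Substituting both into (\ref{eq:linear}) gives
\begin{equation}\label{eq:umb}
b_{ij}^{\nu}(p) = \bigl(f(p)k_1(p)+g(p)k_2(p)\bigr)\,g_{ij}(p).
\end{equation}
This says the matrix $\bigl(b_{ij}^{\nu}\bigr)$ is a scalar multiple of $\bigl(g_{ij}\bigr)$ at every point, which by (\ref{principal}) forces all $\nu$-principal curvatures to coincide with the common value $k(p)=f(p)k_1(p)+g(p)k_2(p)$. Hence $p$ is $\nu$-umbilic for every $p$, and since $\nu$ was arbitrary the lemma follows.

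The argument is essentially formal once the right objects are in place, so I do not expect a genuine obstacle, but the step requiring the most care is justifying that $k_1$ and $k_2$ are smooth functions rather than merely pointwise-defined scalars, and that the frame decomposition $\nu=f\nu_1+g\nu_2$ has smooth coefficients $f,g$; both follow from the smoothness of $\nu_1,\nu_2,\nu$ together with the linear independence assumption (which guarantees the transition is governed by an invertible, smoothly varying $2\times 2$ matrix), so I would state these smoothness facts explicitly before invoking them. I should also note that the common value $k(p)$ is allowed to depend on $p$, consistent with the definition of $\nu$-umbilicity, which asks only that all principal curvatures agree at each point and not that the agreed value be constant across $M$.
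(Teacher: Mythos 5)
Your proposal is correct and takes essentially the same route as the paper: both decompose $\nu = f\nu_1+g\nu_2$ in the normal frame (smoothness of $f,g$ following from linear independence) and exploit linearity of the umbilicity data in $\nu$, the paper doing so at the level of Weingarten maps ($A^{\nu}=fA^{\nu_1}+gA^{\nu_2}$, via $d(f\nu_1)^T=f\,d\nu_1^T$) while you use the equivalent coefficient identity $b_{ij}^{\nu}=f\,b_{ij}^{\nu_1}+g\,b_{ij}^{\nu_2}$. The one translation step you use implicitly --- that ``all $\nu_a$-principal curvatures coincide'' is equivalent to $b_{ij}^{\nu_a}=k_a\,g_{ij}$ --- is valid because the Weingarten map is self-adjoint with respect to the positive-definite induced metric (the surface is spacelike), which is the same fact the paper invokes when writing $A^{\nu_i}=k^{\nu_i}\,\mathrm{id}$.
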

\begin{proof}
By the assumption, for every smooth normal field $\nu$
$$\nu=\lambda_1\nu_1+\lambda_2\nu_2$$
where $\lambda_i,\ i=1,2$ are smooth functions on $M.$

Because $d(\lambda_i\nu_i)^T=\lambda_i d(\nu_i)^T,\ i=1,2$
$$ A^{\nu}=\lambda_1 A^{\nu_1}+\lambda_2 A^{\nu_2}.$$
Since $A^{\nu_i}=k^{\nu_i}\text{id},\ i=1,2$
$$ A^{\nu}=(\lambda_1 k^{\nu_1}+\lambda_2 k^{\nu_2})\text{id}.$$
\end{proof}

 Because $\textbf n_{r}^+,$  $ \textbf n_{r}^-$ are linear independent by the construction and so are $\textbf n_{r_1}^*,\ \textbf n_{r_2}^*$ if $r_1\ne r_2,$  we have
\begin{corollary}
If $M$ is $\textbf n_{r_1}^*$- and $\textbf n_{r_2}^*$-umbilic, where $\textbf n_{r_1}^*\ne\textbf n_{r_2}^*;$ then $M$ is totally umbilic.
\end{corollary}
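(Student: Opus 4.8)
The plan is to reduce the claim to Lemma \ref{funlem} by setting $\nu_1=\textbf{n}_{r_1}^{*}$ and $\nu_2=\textbf{n}_{r_2}^{*}$. Both are smooth normal fields on $M$ (they are $\textbf{n}_r^{*}$-Gauss maps, hence smooth), and by hypothesis $M$ is umbilic with respect to each. The only substantive point is therefore to verify the remaining hypothesis of Lemma \ref{funlem}, namely that $\nu_1(p)$ and $\nu_2(p)$ are linearly independent for every $p\in M$. Once this is in hand, Lemma \ref{funlem} yields immediately that $M$ is $\nu$-umbilic for every smooth normal field $\nu$. In particular, for each $r>0$ the fields $\textbf{n}_r^{+}$ and $\textbf{n}_r^{-}$ are smooth normal fields, so $M$ is $\textbf{n}_r^{*}$-umbilic for every $r>0$; by the definition of total umbilicity this is exactly the desired conclusion.

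To establish the pointwise linear independence I would use the two elementary facts recorded earlier: the last coordinate of $\textbf{n}_r^{*}(p)$ equals $r$, and $\langle \textbf{n}_r^{*}(p),\textbf{n}_r^{*}(p)\rangle=2r$ (the latter from the proof of Lemma \ref{lem1}). Suppose, for contradiction, that $\textbf{n}_{r_2}^{*}(p)=\mu\,\textbf{n}_{r_1}^{*}(p)$ for some scalar $\mu\neq 0$. Comparing last coordinates gives $r_2=\mu r_1$, while comparing the pseudo scalar products gives $2r_2=2\mu^2 r_1$, so that $\mu^2 r_1=r_2=\mu r_1$. Since $r_1>0$ this forces $\mu^2=\mu$, whence $\mu=1$, and consequently $\textbf{n}_{r_2}^{*}(p)=\textbf{n}_{r_1}^{*}(p)$. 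This contradicts the hypothesis $\textbf{n}_{r_1}^{*}\neq\textbf{n}_{r_2}^{*}$, in either of its two admissible forms ($r_1\neq r_2$, or $r_1=r_2$ with opposite signs). Thus $\nu_1$ and $\nu_2$ are everywhere linearly independent, and Lemma \ref{funlem} applies.

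The main obstacle is really only this linear-independence step, and it is mild: the whole content is the observation that, among its scalar multiples, a point of the hyperbola $N_pM\cap H_+^n(\textbf{v},1)$ is pinned down by its last coordinate together with its positive pseudo-norm $2r$. Everything else is a direct appeal to Lemma \ref{funlem}, so no further estimates or constructions are needed. I would only take care that the argument remains genuinely pointwise — the scalar $\mu$ a priori depends on $p$ — but since the contradiction is derived at each fixed $p$ separately, this causes no difficulty.
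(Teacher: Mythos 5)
Your proposal is correct and takes essentially the same route as the paper, which deduces the corollary from Lemma \ref{funlem} together with the remark that $\textbf n_{r_1}^*$ and $\textbf n_{r_2}^*$ are linearly independent ``by the construction.'' Your only addition is to make that linear-independence claim explicit via the last coordinate and the identity $\langle \textbf n_r^*,\textbf n_r^*\rangle=2r$, which is a sound (and welcome) filling-in of a detail the paper leaves implicit.
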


\begin{remark}
\begin{enumerate}
\item  By virue of Lemma \ref{funlem}, a surface is totally umbilic iff it is $\nu$-umbilic for every smooth normal field $\nu.$
\item It is well-known that (see \cite[Lemma 4.1]{izu1}), a surface lying in a pseudo-hypersphere is always  $\nu$-umbilic, where $\nu$ is the position vector field. Therefore, Lemma \ref{funlem} is useful in the  study of the totally umbilicity for surfaces lying in a hyperbolic or a lightcone, because the position vector field and $\textbf n_{r}^*$ are always linear independent. The case of the de Sitter can be studied in a similar way by using the lightcone Gauss maps (see \cite{izu2}, \cite{ko}...). So for simplicity in statements, we just state  for the case of the hyperbolic spaces.
    \end{enumerate}
\end{remark}
By using of Theorem \ref{theoflat1}, Lemma \ref{funlem} or by a direct computation (see Example  \ref{exflat}), we have
\begin{corollary}\label{cor54}
If $M$ is contained in the intersection of a hyperbolic space and a hyperplane, then $M$ is totally umbilic.
\end{corollary}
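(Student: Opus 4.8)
The plan is to produce two smooth normal fields on $M$ that are everywhere linearly independent and with respect to each of which $M$ is umbilic, and then to invoke Lemma \ref{funlem}. Since $\textbf n_r^*$-umbilicity is translation invariant, I would first normalize so that the hyperbolic space is $H_+^n(\textbf 0,R)$ centered at the origin, so that $M\subset H_+^n(\textbf 0,R)\cap HP(\textbf b,c)$ for some hyperplane with pseudo-normal $\textbf b$.

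First I would take $\nu_1:=\textbf X$, the position vector field. For $p\in M$ one has $T_pM\subset T_pH_+^n(\textbf 0,R)=\textbf X(p)^{\perp}$, so $\nu_1$ is a normal field on $M$, and it is timelike since $\langle\nu_1,\nu_1\rangle=-R^2$; by the fact recalled in the Remark above (\cite[Lemma 4.1]{izu1}), $M$ is $\nu_1$-umbilic. Next I would take $\nu_2:=\textbf b$, the constant pseudo-normal of the hyperplane. Because $M\subset HP(\textbf b,c)$ we have $T_pM\subset\textbf b^{\perp}$, so $\nu_2$ is a normal field; being constant it satisfies $d\nu_2=0$, hence $A^{\nu_2}=-d\nu_2^{T}=0=0\cdot\text{id}$, and $M$ is $\nu_2$-flat, in particular $\nu_2$-umbilic. (Alternatively, when the hyperplane is timelike and does not contain the $x_{n+1}$-axis, Theorem \ref{theoflat1} directly furnishes an $r$ for which $M$ is $\textbf n_r^*$-flat, and one may use $\textbf n_r^*$ in place of $\nu_2$; by the Remark this $\textbf n_r^*$ is automatically independent from the position vector field, which sidesteps the point below.)

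The remaining, and main, point is the linear independence of $\nu_1(p)$ and $\nu_2$ at every $p\in M$. Since $\nu_1$ is timelike, independence can only fail where $\textbf b$ is timelike and $\textbf X(p)\parallel\textbf b$; but at such a point $T_pH_+^n(\textbf 0,R)=\textbf X(p)^{\perp}=\textbf b^{\perp}$, i.e.\ the hyperbolic space and the hyperplane are tangent, which I expect to contradict $M$ being a genuine $(n-1)$-dimensional (transverse) intersection surface through $p$. Granting this, $\nu_1$ and $\nu_2$ are everywhere linearly independent normal fields along which $M$ is umbilic, so Lemma \ref{funlem} gives that $M$ is $\nu$-umbilic for every smooth normal field $\nu$; in particular $M$ is $\textbf n_r^*$-umbilic for every $r>0$, that is, totally umbilic. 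I expect the tangency/transversality justification to be the only delicate step, the two umbilicity verifications being routine; a direct computation in coordinates, as in Example \ref{exflat}, offers an alternative route that avoids the independence discussion altogether.
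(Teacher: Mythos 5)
Your proof is correct, and although it rests on the same two pillars as the paper's argument (the position vector field $\textbf X$, which is $\nu$-umbilic by \cite[Lemma 4.1]{izu1}, and Lemma \ref{funlem}), your choice of second normal field is genuinely different and actually buys more. The paper derives the corollary by combining Theorem \ref{theoflat1} with Lemma \ref{funlem} --- that is, it takes as second field the constant $\textbf n_r^*$ that Theorem \ref{theoflat1} provides --- or else defers to ``a direct computation'' as in Example \ref{exflat}. But Theorem \ref{theoflat1} only applies when the hyperplane is timelike with pseudo-normal $\textbf a$ satisfying $a_{n+1}\ne 0$; the paper's own Remark concedes that timelike hyperplanes containing the $x_{n+1}$-axis escape it (hence the fallback to the worked example), and lightlike and spacelike hyperplanes are not addressed by that route at all. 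Your field $\nu_2=\textbf b$, the constant pseudo-normal of the hyperplane, treats all these cases uniformly: differentiating $\langle\textbf X,\textbf b\rangle=c$ shows it is normal along $M$, and $d\nu_2=0$ gives $A^{\nu_2}=-d\nu_2^{T}=0$, so $M$ is $\nu_2$-flat, hence $\nu_2$-umbilic. Independence from the timelike $\textbf X(p)$ is automatic when $\textbf b$ is spacelike or lightlike (a timelike vector is never proportional to either), so the only delicate case is exactly the one you isolate.

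The tangency point you hedged on is easily closed, so there is no real gap. If $\textbf b$ is timelike, normalize $\langle\textbf b,\textbf b\rangle=-1$ and write any $x\in H^{n}(0,R)\cap HP(\textbf b,c)$ as $x=\alpha\textbf b+w$ with $w$ in the spacelike subspace $\textbf b^{\perp}$; then $\alpha=-c$ and $\langle w,w\rangle=c^{2}-R^{2}$. If $\textbf X(p)=\lambda\textbf b$ lay on both, then $c=-\lambda$ and $\lambda^{2}=R^{2}$, forcing $\langle w,w\rangle=0$, i.e. $w=0$: the intersection would be the single point $p$, contradicting $\dim M=n-1\geq 1$. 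With that observation supplied, Lemma \ref{funlem} applies and your conclusion follows; in fact your argument is a cleaner and more complete proof than the paper's sketch, which covers the full generality of the statement only by appeal to case-by-case computation.
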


\begin{theorem}\label{theoum1}
Let $M$ be a spacelike  surface of codimension two in $H_+^n(0,R).$ The following statements are equivalent.
\begin{enumerate}
\item  there exists $r>0,$\ $M$ is $\textbf n_r^*$-umbilic;
\item   $M$ is totally umbilic;
\item $M$ is contained in a hyperplane.
    \end{enumerate}
\end{theorem}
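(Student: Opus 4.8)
The plan is to prove the two equivalences $(1)\Leftrightarrow(2)$ and $(2)\Leftrightarrow(3)$, which together give the statement. The implication $(2)\Rightarrow(1)$ is immediate, since total umbilicity means $\textbf n_r^{*}$-umbilicity for every $r>0$. For $(1)\Rightarrow(2)$ I would use the position vector field $\nu=\textbf X$, which is a normal field because $\langle\textbf X,\textbf X\rangle=-R^2$ on $M\subset H_+^n(0,R)$; as recalled in the Remark after Lemma \ref{funlem}, $M$ is automatically $\nu$-umbilic (\cite[Lemma 4.1]{izu1}), and $\nu$ is linearly independent from $\textbf n_r^{*}$ at every point (the former timelike, the latter spacelike, since $\langle\textbf n_r^{*},\textbf n_r^{*}\rangle=2r>0$). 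Thus if $M$ is $\textbf n_r^{*}$-umbilic for some $r$, Lemma \ref{funlem} applied to the pair $\{\nu,\textbf n_r^{*}\}$ makes $M$ umbilic with respect to every normal field, i.e. totally umbilic. The implication $(3)\Rightarrow(2)$ is exactly Corollary \ref{cor54}, since $(3)$ together with $M\subset H_+^n(0,R)$ places $M$ in the intersection of a hyperbolic space and a hyperplane.

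The substantial direction is $(2)\Rightarrow(3)$. Since $N_pM$ is a timelike $2$-plane containing the timelike position vector $\textbf X$, its orthogonal complement inside $N_pM$ is spacelike and one-dimensional, so I may choose a unit spacelike normal field $\xi$ with $\langle\xi,\xi\rangle=1$ and $\langle\textbf X,\xi\rangle=0$. Total umbilicity gives $A^{\xi}=\kappa\,\mathrm{id}$ for a smooth function $\kappa:=k^{\xi}$. The first computation is to show that $\xi_{u_i}$ is purely tangential: its normal components vanish because $\langle\xi_{u_i},\xi\rangle=\tfrac12\partial_{u_i}\langle\xi,\xi\rangle=0$ and $\langle\xi_{u_i},\textbf X\rangle=-\langle\xi,\textbf X_{u_i}\rangle=0$, while the tangential component is $-A^{\xi}(\textbf X_{u_i})=-\kappa\,\textbf X_{u_i}$. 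This yields
$$\xi_{u_i}=-\kappa\,\textbf X_{u_i},\qquad i=1,2,\dots,n-1.$$

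The key step, and the main obstacle, is to prove that $\kappa$ is constant. Differentiating the identity above in $u_j$, and using that mixed partials of $\xi$ and of $\textbf X$ commute, I would get $\kappa_{u_j}\textbf X_{u_i}=\kappa_{u_i}\textbf X_{u_j}$ for all $i,j$; since $\{\textbf X_{u_1},\dots,\textbf X_{u_{n-1}}\}$ are linearly independent, taking any pair $i\ne j$ forces $\kappa_{u_i}=\kappa_{u_j}=0$, so $\kappa$ is locally constant and hence constant on the connected surface $M$. With $\kappa$ constant, $(\xi+\kappa\textbf X)_{u_i}=\xi_{u_i}+\kappa\textbf X_{u_i}=0$, so $\textbf a:=\xi+\kappa\textbf X$ is a constant nonzero vector (its $\xi$-component is $1$), and $\langle\textbf X,\textbf a\rangle=\langle\textbf X,\xi\rangle+\kappa\langle\textbf X,\textbf X\rangle=-\kappa R^2$ is constant; hence $M\subset HP(\textbf a,-\kappa R^2)$, which is $(3)$. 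Replacing $\xi$ by $-\xi$ sends $\textbf a$ to $-\textbf a$, so the hyperplane does not depend on the local choice of $\xi$ and the conclusion globalizes. Note that this Codazzi-type step needs $\dim M=n-1\ge 2$ (so that two independent tangent directions are available) and connectedness of $M$; the degenerate case $n=2$, where $M$ is a curve and umbilicity is vacuous, must be treated separately or excluded.
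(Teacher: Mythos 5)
Your proof is correct and follows essentially the same route as the paper: the equivalence of (1) and (2) via Lemma \ref{funlem} applied to the position field $\textbf X$ (automatically umbilic by \cite[Lemma 4.1]{izu1}) paired with the spacelike $\textbf n_r^*$, the implication $(3)\Rightarrow(2)$ via Corollary \ref{cor54}, and $(2)\Rightarrow(3)$ via a unit spacelike normal field pseudo-orthogonal to $\textbf X$ --- your $\xi$ is exactly the paper's wedge-product field $\nu$ up to sign --- shown to be parallel and then of the form $\lambda\textbf X+\textbf a$ with $\lambda$ constant. The only difference is that where the paper cites \cite[Lemma 4.2]{izu1} for the constancy of the umbilic factor, you prove it inline by the mixed-partials (Codazzi-type) argument, correctly flagging the hypothesis $\dim M=n-1\ge 2$ that both arguments need and that the paper leaves implicit.
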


\begin{proof}

($1. \Rightarrow 2.$)\ Because $M$ is contained in $H_+^n(0,R),$ \ $M$ is umbilic with respect to the position vector field $\textbf X.$ Moreover, because $\textbf X$ is timelike  while $\textbf n_r^*$ is spacelike, $M$ is totally umbilic by virtue of Lemma \ref{funlem}.

($2. \Rightarrow 3.$) Let $$\nu=\frac{\textbf X\wedge \textbf X_{u_1}\wedge \textbf X_{u_2}\wedge \dots \wedge \textbf X_{u_{n-1}}}{\left|\textbf X\wedge \textbf X_{u_1}\wedge \textbf X_{u_2}\wedge \dots \wedge \textbf X_{u_{n-1}}\right|}.$$
Because
\begin{equation}\label{hapdan}\langle \nu,\textbf X\rangle =0,\qquad \langle \nu,\nu\rangle =\pm 1,\qquad  \langle \nu,\textbf X_{u_i}\rangle =0,\ i=1,2,...,n-1;\end{equation}
we have
$$\langle d\nu,\textbf X\rangle = \langle \nu,d\textbf X\rangle =0;\  \langle \nu,d\nu\rangle =0.  $$
Since $\{\nu,\textbf X\}$ is a basis of $N_pM,$  $d\nu\in T_pM,$ i.e. $\nu$ is parallel.

By virtue of Lemma 4.2 in \cite{izu1}, $d\nu=\lambda d\textbf X,$ where $\lambda$ is constant and hence $\nu=\lambda \textbf X+ \textbf a,$ where
$\textbf a$ is a constant vector.
 Since
$\langle \nu,\textbf X\rangle =0, $ \  $\langle \textbf X, \textbf a\rangle=-\langle \textbf X,\lambda \textbf X\rangle=-\lambda R=c$\ (a constant).
Thus, $M\subset HP(\textbf a,c)$.\\
($3.\Rightarrow 1.$) follows by Corollary \ref{cor54}.
\end{proof}

\begin{lemma}\label{lempara}
Let $\nu_1,\ \nu_2$ be parallel vector fields on the connected surface $M$ and $\nu=\alpha\nu_1+\beta\nu_2.$ Suppose that for every $p\in M,$ $\nu_1(p), \nu_2(p)$ are linear independent,  then $\nu$ is parallel if and only if $\alpha$ and $\beta$ are constants.
\end{lemma}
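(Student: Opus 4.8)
The plan is to unwind the definition of a parallel normal field and reduce the statement to a computation with the tangential part of $d\nu$. Recall that $\nu$ is parallel means $d\nu \in T_pM$ for every $p$, i.e. the normal component $d\nu^N$ vanishes identically. Writing $\nu = \alpha\nu_1 + \beta\nu_2$ and differentiating with respect to a coordinate $u_i$, I would compute
\begin{equation}\label{eq:diffnu}
\nu_{u_i} = \alpha_{u_i}\nu_1 + \beta_{u_i}\nu_2 + \alpha(\nu_1)_{u_i} + \beta(\nu_2)_{u_i}.
\end{equation}
Since $\nu_1$ and $\nu_2$ are parallel, the vectors $(\nu_1)_{u_i}$ and $(\nu_2)_{u_i}$ lie in $T_pM$, so the only contribution to the normal component $(\nu_{u_i})^N$ comes from the first two terms. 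Because $\nu_1, \nu_2 \in N_pM$, they equal their own normal parts, and therefore
\begin{equation}\label{eq:normalpart}
(\nu_{u_i})^N = \alpha_{u_i}\nu_1 + \beta_{u_i}\nu_2.
\end{equation}

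From here both implications are immediate. For the ``if'' direction, if $\alpha$ and $\beta$ are constants then $\alpha_{u_i} = \beta_{u_i} = 0$ for all $i$, so \eqref{eq:normalpart} gives $(\nu_{u_i})^N = 0$, i.e. $\nu_{u_i} \in T_pM$ for each $i$, which is exactly the statement that $\nu$ is parallel. For the ``only if'' direction, if $\nu$ is parallel then $(\nu_{u_i})^N = 0$, so \eqref{eq:normalpart} yields $\alpha_{u_i}\nu_1 + \beta_{u_i}\nu_2 = 0$. Here I would invoke the hypothesis that $\nu_1(p)$ and $\nu_2(p)$ are linearly independent at every point: their linear independence forces $\alpha_{u_i}(p) = \beta_{u_i}(p) = 0$ for all $i$ and all $p$. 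Thus all first partials of $\alpha$ and $\beta$ vanish on $M$, and since $M$ is connected, $\alpha$ and $\beta$ are constant.

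The only genuinely delicate point is the clean splitting in \eqref{eq:normalpart}, namely that differentiating a parallel field contributes nothing to the normal component and that a normal field coincides with its normal part; both are formal but must be stated carefully so that the linear-independence argument applies pointwise. The connectedness hypothesis is what upgrades ``all partials vanish'' to ``the functions are constant,'' so I would make sure to cite it explicitly at the end. No real obstacle is expected; the lemma is essentially a bookkeeping consequence of the definition of parallelism together with pointwise linear independence.
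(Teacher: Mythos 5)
Your proof is correct and follows essentially the same route as the paper's: taking the normal component of $d\nu = d\alpha\,\nu_1 + d\beta\,\nu_2 + \alpha\,d\nu_1 + \beta\,d\nu_2$ to get $d\alpha\,\nu_1 + d\beta\,\nu_2 = 0$, then invoking pointwise linear independence and connectedness. You simply spell out the bookkeeping (the splitting into tangential and normal parts, and the use of connectedness) that the paper's terser proof leaves implicit.
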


\begin{proof}
The assumption that $\nu, \ \nu_1,\ \nu_2$ are parallel yields
$$d\alpha\nu_1+d\beta\nu_2=0.$$
But this implies $d\alpha=d\beta=0$ since   $\nu_1, \nu_2$ are linear independent.
Conversely, it is obvious that if  $\alpha$ and $\beta$ are constants then $\nu$ is parallel.

\end{proof}

Among all hyperspheres $HP(\textbf n,c)\cap H_+^{n}(0,R)$ \ ($\textbf n$ is timelike) of the hyperbolic  space $H_+^{n}(0,R),$ the case of right hyperspheres, i.e. $\textbf n=(0,0,\ldots, 1),$ are special. The following theorem give some necessary and sufficient conditions for a  surface lying in a hyperbolic space to be a part of a right hypersphere.


\begin{theorem}\label{theoum2} Let $M$ be a  surface  contained in  $H_+^{n}(0,R).$ The following statements are equivalent:
\begin{enumerate}
\item $M$ is contained in a right hypersphere;
\item $\text{\bf n}_r^{*}$ is parallel for any $r>0;$
\item there exists two different parallel normal fields $\text{\bf n}_{r_1}^*,\ \text{\bf n}_{r_2}^*;$
\item there exists $r>0,$ such that $A^{\text{\bf n}_r^*}=-\alpha{\text{id}},$ where $\alpha$ is constant.
\end{enumerate}
\end{theorem}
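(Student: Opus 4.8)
The plan is to establish the cycle $(1)\Rightarrow(2)\Rightarrow(3)\Rightarrow(1)$ and then close the loop on the fourth condition through $(1)\Rightarrow(4)$ and $(4)\Rightarrow(1)$. The whole argument rests on the fact that on any $M\subset H_+^n(0,R)$ there are two canonical \emph{parallel} normal fields. First, the position field $\textbf X$: differentiating $\langle\textbf X,\textbf X\rangle=-R^2$ twice gives $\langle\textbf X_{u_iu_j},\textbf X\rangle=-g_{ij}$, so $A^{\textbf X}=-\text{id}$ and $d\textbf X\in T_pM$, i.e. $\textbf X$ is parallel. Second, the unit field $\nu=\textbf X\wedge\textbf X_{u_1}\wedge\cdots\wedge\textbf X_{u_{n-1}}/|\textbf X\wedge\textbf X_{u_1}\wedge\cdots\wedge\textbf X_{u_{n-1}}|$; its parallelism follows exactly as in the proof of Theorem \ref{theoum1} (from $\langle\nu,\textbf X\rangle=0$, $\nu$ normal, and $\langle\nu,\nu\rangle$ constant), \emph{independently} of any umbilicity. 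I shall use freely that $\{\textbf X,\textbf n_r^*\}$ and $\{\textbf X,\nu\}$ are bases of $N_pM$, that the last coordinate of $\textbf n_r^*$ is the constant $r$, and that $\langle\textbf n_r^*,\textbf n_r^*\rangle=2r$.

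For $(1)\Rightarrow(2)$, if $M\subset\{x_{n+1}=c\}$ then every $\textbf X_{u_i}$ has vanishing last coordinate, so the constant vector $\textbf e_{n+1}=(0,\dots,0,1)$ is normal to $M$ and, being constant, parallel. Writing $\textbf n_r^*=a\textbf X+b\textbf e_{n+1}$ and comparing last coordinates gives $ac+b=r$, while $\langle\textbf n_r^*,\textbf n_r^*\rangle=2r$ gives a second scalar relation; since $c,R,r$ are constants these two equations force $a,b$ to be constant, and Lemma \ref{lempara} then makes $\textbf n_r^*$ parallel for every $r$. The implication $(2)\Rightarrow(3)$ is immediate (take $r_1\ne r_2$, or $\textbf n_r^+,\textbf n_r^-$). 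For $(3)\Rightarrow(1)$, expand the second field in the parallel basis $\{\textbf X,\textbf n_{r_1}^*\}$ as $\textbf n_{r_2}^*=\lambda\textbf X+\mu\textbf n_{r_1}^*$; Lemma \ref{lempara} makes $\lambda,\mu$ constant, and comparing last coordinates yields $\lambda\,x_{n+1}+\mu r_1=r_2$. If $\lambda\ne0$ this says $x_{n+1}$ is constant, i.e. $M$ lies in a right hypersphere; the case $\lambda=0$ is excluded because then $2r_2=\mu^2(2r_1)$ together with $r_2=\mu r_1$ forces $\mu=1$, contradicting $\textbf n_{r_1}^*\ne\textbf n_{r_2}^*$.

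The fourth condition is handled by $(1)\Rightarrow(4)$ and $(4)\Rightarrow(1)$. The first is a byproduct of $(1)\Rightarrow(2)$: with $a,b$ constant, $(\textbf n_r^*)_{u_i}=a\textbf X_{u_i}$ is tangent, so $A^{\textbf n_r^*}=-a\,\text{id}$ with $a$ constant. For $(4)\Rightarrow(1)$, note that $A^{\textbf n_r^*}=-\alpha\,\text{id}$ makes $M$ $\textbf n_r^*$-umbilic; combined with umbilicity in $\textbf X$ and Lemma \ref{funlem}, $M$ is totally umbilic, so by Theorem \ref{theoum1} and Lemma 4.2 of \cite{izu1} one gets $\nu=\lambda\textbf X+\textbf a$ with $\lambda$ constant and $\textbf a$ a constant vector (the bounding hyperplane's normal, with $\langle\textbf X,\textbf a\rangle=c$ constant). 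Writing $\textbf n_r^*=p\textbf X+q\nu$ gives $A^{\textbf n_r^*}=-p\,\text{id}+qA^\nu=-(p+q\lambda)\text{id}$, so the hypothesis identifies $\alpha=p+q\lambda$ as constant, and substituting $\nu$ yields $\textbf n_r^*=\alpha\textbf X+q\textbf a$. Then $\langle\textbf n_r^*,\textbf n_r^*\rangle=-\alpha^2R^2+2\alpha qc+q^2\langle\textbf a,\textbf a\rangle=2r$ is a quadratic in $q$ with constant coefficients, forcing $q$ constant; the last-coordinate identity $\alpha\,x_{n+1}+q\,a_{n+1}=r$ then shows $x_{n+1}$ is constant, which is $(1)$.

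The main obstacle is precisely this last implication $(4)\Rightarrow(1)$: everything hinges on converting the two scalar normalizations of $\textbf n_r^*$ (fixed length $2r$ and fixed last coordinate $r$) into constancy of the coefficients in a parallel basis, and on isolating the genuinely spherical regime $\alpha\ne0$. When $\alpha=0$ one has $A^{\textbf n_r^*}=0$, so by Theorem \ref{theoflat1} the surface sits in a \emph{timelike} hyperplane and is a hyperbolic subspace rather than a right hypersphere; this degeneracy must be set aside, so $(4)$ should be read with $\alpha$ a nonzero constant. One also needs the mild nondegeneracy $\langle\textbf a,\textbf a\rangle\ne0$ for the quadratic-in-$q$ step; the lightlike case corresponds to horospheres and is excluded by the standing assumption, as in the remark following Lemma \ref{funlem}, that $M$ lies in a hyperbolic space.
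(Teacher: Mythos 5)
Your proof is correct and shares the paper's overall skeleton --- the cycle $(1)\Rightarrow(2)\Rightarrow(3)\Rightarrow(1)$ plus $(1)\Rightarrow(4)$ and $(4)\Rightarrow(1)$, resting on the parallel position field $\textbf X$, Lemma \ref{lempara}, Theorem \ref{theoum1} and Lemma 4.2 of \cite{izu1} --- but it deviates at two points, and at one of them you are more careful than the paper. In $(3)\Rightarrow(1)$ the paper expands $\textbf X=\alpha\textbf n_{r_1}^*+\beta\textbf n_{r_2}^*$ in the basis of the two given parallel fields, so constancy of the last coordinate $\alpha r_1+\beta r_2$ is immediate; you instead expand $\textbf n_{r_2}^*=\lambda\textbf X+\mu\textbf n_{r_1}^*$, which costs you the case analysis $\lambda=0$ (correctly excluded via the two normalizations of $\textbf n_r^*$) --- your route works, the paper's is cleaner. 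In $(4)\Rightarrow(1)$ the paper writes $\textbf n_r^*=\alpha\textbf X+\beta\textbf a$ directly in the pair $\{\textbf X,\textbf a\}$, which is a basis of $N_pM$ only ``except at most one point'' where $\textbf X$ is parallel to $\textbf a$; your detour through the globally valid basis $\{\textbf X,\nu\}$, with $\nu=\lambda\textbf X+\textbf a$ and $\lambda$ constant, removes that caveat. Most importantly, your closing observation is a genuine catch rather than pedantry: the paper's last step (``$\beta$ is constant and therefore so is the last coordinate of $\textbf X$'') silently divides by $\alpha$, and for $\alpha=0$ the implication $(4)\Rightarrow(1)$ actually fails --- for instance $M=H_+^{n}(0,R)\cap HP(\textbf a,c)$ with $\textbf a$ spacelike and $a_{n+1}\ne 0$ is $\textbf n_r^*$-flat for a suitable $r$ by Theorem \ref{theoflat1}, so $A^{\textbf n_r^*}=0=-\alpha\,\text{id}$ with $\alpha=0$ constant, yet $x_{n+1}$ is not constant on $M$; so statement $(4)$ must indeed be read with $\alpha\ne 0$, exactly as you say. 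One of your precautions is unnecessary, though: $\langle\textbf a,\textbf a\rangle\ne 0$ is automatic here, since $\textbf a=\nu-\lambda\textbf X$ with $\nu$ a unit spacelike field orthogonal to the timelike $\textbf X$ gives $\langle\textbf a,\textbf a\rangle=1+\lambda^2R^2>0$, so the quadratic in $q$ is always nondegenerate; and note that passing from ``$q$ takes finitely many values'' to ``$q$ is constant'' uses connectedness of $M$, which both you and the paper leave implicit.
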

\begin{proof}
($1. \Rightarrow 2.$)\ It is not hard to see that, because $M\subset\{x_{n+1}=c\}\cap H_+^{n}(0,R),$  for every $r>0,$
\begin{equation}\label{chinhhang}\textbf n_r^*=\alpha\textbf X+ \beta\textbf v,\end{equation}
where $\alpha,\beta$ are constants. Since $\textbf X$ is parallel and $\textbf v=(0,0,\ldots,0, -1)$ is constant, $\textbf n_r^*$ is parallel.


($2. \Rightarrow 3.$) Obviously.

($3. \Rightarrow 1.$)\
Because $\textbf X$ is a parallel normal field and $\{\text{\bf n}_{r_1}^{*},\text{\bf n}_{r_2}^{*}\}$ is a basis of $N_pM,$ we have the linear expression
\begin{equation}\label{bieuthi}\textbf X=\alpha \textbf n_{r_1}^*+\beta \textbf n_{r_2}^*,\end{equation}
where $\alpha,\beta$ are constants by virtue of Lemma \ref{lempara}. Since the last coordinates of  $\text{\bf n}_{r_1}^{*}$ and $\text{\bf n}_{r_2}^{*}$ are constants, the last coordinate of $\textbf X$ is constant.

($1.\Rightarrow 4.$) The equation (\ref{chinhhang}) implies that
$$A^{\textbf n_r^*}=-\alpha\text{id}.$$

($4.\Rightarrow 1.$) By the assumption, $M$ is $\textbf n_r^*$-umbilic. By virtue of Theorem \ref{theoum1}, $M\subset HP(\textbf a,c),$ where $\textbf a$ is a unit vector. Except at most one point, where $\textbf X$ is parallel to $\textbf a,$
$$\textbf n_r^*=\alpha\textbf X+\beta\textbf a,$$
where $\beta$ is a differential function on $M$.

Since $\langle\textbf n_r^*,\textbf n_r^*\rangle=2r,\ \ \langle\textbf X,\textbf X\rangle=-R^2,  \ \ \langle\textbf X,\textbf a\rangle=c$ we obtain the following equation
$$2r=-\alpha^2 R^2+2\alpha c\beta +\textbf a^2\beta^2 .$$
Thus, $\beta$ is constant and therefore so is the last coordinate of $\textbf X.$
\end{proof}
 The following theorem give another necessary and sufficient condition for a surface to be a part of a right hypersphere of a hyperbolic space without the assumption  of lying in the hyperbolic space.

\begin{theorem}\label{hang} Let $M$ be a  surface in $\Bbb R_1^{n+1}.$ The following statements are equivalent
\begin{enumerate}
\item there exists $r>0$ such that $\textbf n_r^*$ is parallel, not constant,  and $M$ is $\textbf n_r^*$-umbilic;
\item $M$ is contained in a right hypersphere in a hyperbolic space.
\end{enumerate}
\end{theorem}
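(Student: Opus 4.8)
The plan is to prove both implications by directly integrating the parallel umbilic normal field $\nu := \textbf n_r^*$, exploiting two facts about it: the last coordinate of $\nu$ is the constant $r$, and $\langle \nu, \nu\rangle = 2r$ (the identity established in the proof of Lemma \ref{lem1}).

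For the implication $(1.\Rightarrow 2.)$, I would first record the differential consequences of the hypotheses. Parallelism means $d\nu$ is tangent, so $d\nu = d\nu^T = -A^\nu$, while $\textbf n_r^*$-umbilicity gives $A^\nu = k\,\mathrm{id}$ for a function $k$; together these read $\nu_{u_i} = -k\,\textbf X_{u_i}$ for $i = 1, \dots, n-1$. The main obstacle, and really the only substantive step, is to show that $k$ is constant. I expect this from Lemma 4.2 of \cite{izu1} (invoked already in the proof of Theorem \ref{theoum1}), or directly: differentiating $\nu_{u_i} = -k\textbf X_{u_i}$ and using $\nu_{u_iu_j} = \nu_{u_ju_i}$ together with $\textbf X_{u_iu_j} = \textbf X_{u_ju_i}$ forces $k_{u_j}\textbf X_{u_i} = k_{u_i}\textbf X_{u_j}$; since $\textbf X_{u_1}, \dots, \textbf X_{u_{n-1}}$ are independent (and $n - 1 \geq 2$), this yields $dk = 0$, hence $k$ is constant on the connected $M$.

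With $k$ constant I would integrate: $(\nu + k\textbf X)_{u_i} = 0$, so $\nu + k\textbf X = \textbf b$ for a constant vector $\textbf b$, and $k \neq 0$ because $\nu$ is not constant. Hence $\textbf X - \textbf b/k = -\nu/k$. Reading the last coordinate and using that the last coordinate of $\nu$ is $r$ shows $x_{n+1}$ is constant on $M$, so $M$ lies in a spacelike hyperplane $\{x_{n+1} = c_0\}$. Taking the pseudo-norm and using $\langle \nu, \nu\rangle = 2r$ gives $\langle \textbf X - \textbf b/k, \textbf X - \textbf b/k\rangle = 2r/k^2 > 0$, which together with the constant height forces the spatial coordinates of $M$ to lie on a Euclidean sphere centred at the spatial part $\textbf m$ of $\textbf b/k$. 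It then remains to realise this round sphere as a right hypersphere: placing the centre $\textbf a_0 = (\textbf m, a_{n+1})$ on the vertical axis through $\textbf m$ with $a_{n+1} < c_0$ chosen so that $(c_0 - a_{n+1})^2$ exceeds the squared spatial radius, one checks directly that $M \subset \{x_{n+1} = c_0\} \cap H_+^n(\textbf a_0, R)$ for a suitable $R > 0$, i.e.\ $M$ lies in a right hypersphere of a hyperbolic space.

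For the converse $(2.\Rightarrow 1.)$ I would reduce to the results already proved. Since $\textbf n_r^*$ depends only on the direction of the normal plane and not on the position of $p$, it is translation invariant, so I may assume the ambient hyperbolic space is $H_+^n(0,R)$; then $M$ lies in a right hypersphere of $H_+^n(0,R)$, and Theorem \ref{theoum2} gives that $\textbf n_r^*$ is parallel while Corollary \ref{cor54} (since $M$ lies in the intersection of a hyperbolic space and a hyperplane) gives that $M$ is totally umbilic, in particular $\textbf n_r^*$-umbilic. Finally, a non-degenerate open piece of a round sphere is contained in no hyperplane, so by Theorem \ref{theoflat1} $M$ is not $\textbf n_r^*$-flat and $\textbf n_r^*$ is not constant, which furnishes condition (1).
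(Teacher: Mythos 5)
Your argument is correct in substance, and in the direction $(1)\Rightarrow(2)$ it takes a genuinely different route from the paper. Both proofs begin identically: integrate the parallel umbilic field to get $\textbf n_r^*=\alpha\textbf X+\textbf a$ with $\alpha$ a nonzero constant (the paper cites Lemma 4.2 of \cite{izu1}; your direct Codazzi-type computation $k_{u_j}\textbf X_{u_i}=k_{u_i}\textbf X_{u_j}$ is a valid self-contained substitute, and you rightly flag that it needs $n-1\geq 2$ and connectedness). But from there the paper exploits the defining hyperbola relation $\langle \textbf n_r^*-\textbf v,\textbf n_r^*-\textbf v\rangle=-1$ to rewrite the integrated equation as $\textbf X-\frac{1}{\alpha}(\textbf v-\textbf a)=\frac{1}{\alpha}(\textbf n_r^*-\textbf v)$, concluding in one line that $M$ lies in the hyperbolic space with center $\frac{1}{\alpha}(\textbf v-\textbf a)$ and radius $\frac{1}{\alpha}$, and then invokes Theorem \ref{theoum2} (via its condition (4), since $A^{\textbf n_r^*}=-\alpha\,\mathrm{id}$) to land in a right hypersphere. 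You instead use the two scalar identities $(\textbf n_r^*)_{n+1}=r$ and $\langle\textbf n_r^*,\textbf n_r^*\rangle=2r$ to show $x_{n+1}$ is constant on $M$ and the spatial part lies on a round Euclidean sphere, and then construct an ambient hyperbolic space by hand. Your version avoids routing through Theorem \ref{theoum2} in this direction, at the cost of the explicit construction; the paper's is shorter and exhibits the hyperbolic space intrinsically, with radius $1/\alpha$ determined by the umbilic factor.

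One claim in your converse needs repair: ``a non-degenerate open piece of a round sphere is contained in no hyperplane'' is false as written, since your $M$ \emph{is} contained in the horizontal hyperplane $\{x_{n+1}=c\}$ carrying the right hypersphere. What you actually need, and what is true, is that $M$ lies in no hyperplane of the kind appearing in Theorem \ref{theoflat1}(3), namely $HP(\textbf a,c)$ with $\textbf a$ spacelike and $a_{n+1}\neq 0$: any hyperplane other than $\{x_{n+1}=c\}$ meets $\{x_{n+1}=c\}\cong\Bbb R^n$ in an affine plane of dimension at most $n-1$, while an open piece of the round $(n-1)$-sphere affinely spans all of $\{x_{n+1}=c\}$, and $\{x_{n+1}=c\}$ itself has timelike normal, so condition (3) fails and $\textbf n_r^*$ is not constant. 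Alternatively, non-constancy follows at once from equation (\ref{chinhhang}) in the proof of Theorem \ref{theoum2}: there $\textbf n_r^*=\alpha\textbf X+\beta\textbf v$ with $\alpha\neq 0$ (otherwise $\textbf n_r^*=\beta\textbf v$ would be timelike, contradicting $\langle\textbf n_r^*,\textbf n_r^*\rangle=2r>0$), whence $d\textbf n_r^*=\alpha\,d\textbf X\neq 0$. Note that the paper's own ``$(2)\Rightarrow(1)$ is obvious by Theorem \ref{theoum2}'' silently skips this point, so your instinct to verify it was sound; only the justification needed the correction above.
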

\begin{proof}
($(1)\Rightarrow (2)$)\  Since $M$ is $\textbf n_r^* $-umbilic,\ $\textbf  n_r^*$ is parallel and $\langle \textbf  n_r^*,\textbf  n_r^*\rangle =2r;$ $d\textbf  n_r^*=\alpha d\textbf X,\ \alpha=\text{const.}\ne 0,$ by virtue of Lemma  4.2 in \cite{izu1}. Therefore,
\begin{equation}\label{111} \textbf  n_r^*=\alpha\textbf X+\textbf a,\end{equation}
where $\textbf a$ is constant.

Let $\textbf v=(0,0,\dots,0,-1).$ From (\ref{111}) we have
$$\textbf X-\frac{1}{\alpha}(\textbf v-\textbf a)=\frac{1}{\alpha}(\textbf  n_r^*-\textbf v).$$
A simple calculation yields
 $$\langle \text X-\frac{1}{\alpha}(\textbf v-\textbf a),\langle \text X-\frac{1}{\alpha}(\textbf v-\textbf a)\rangle=-\frac{1}{\alpha^2},      $$
i.e. $M$ is contained in the hyperbolic space with center $\frac{1}{\alpha}(\textbf v-\textbf a) $ and radius $R=\frac{1}{\alpha},$ and hence contained in a right hypersphere by virtue of Theorem \ref{theoum2}.

($(2)\Rightarrow (1)$)\ is obvious by Theorem \ref{theoum2}.
\end{proof}
The following is somewhat similar to the first statement of Lemma 4.2 in \cite{izu1}.
\begin{theorem} \label{theoum4}\ Let  $M$ be a connected surface in $\Bbb R_1^{n+1}.$ If there exists $r>0,$ such that $M$ is $n_r^*$-umbilic and for every $i, j \in\{1,2,\ldots, n-1\}$
\begin{equation}\label{dk}[(\textbf n_r^*)^T_{u_i}]_{u_j}=[(\textbf n_r^*)^T_{u_j}]_{u_i}\end{equation}
 then $A_p^{\textbf n_r^*}=-\alpha\text{id},$ where $\alpha$ is constant.
\end{theorem}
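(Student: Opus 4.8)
The plan is to convert $\textbf{n}_r^*$-umbilicity into an explicit formula for the tangential derivatives of $\textbf{n}_r^*$, substitute that formula into the symmetry condition (\ref{dk}), and then read off that the common principal curvature is locally constant.

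First I would record what umbilicity says. By definition $A_p^{\textbf{n}_r^*}=-d{\textbf{n}_r^*}^T(p)$, so $\textbf{n}_r^*$-umbilicity means $A_p^{\textbf{n}_r^*}=k(p)\,\text{id}$ for a smooth function $k$ on $M$. Applying this to the basis vectors $\textbf{X}_{u_i}$ of $T_pM$ gives, for every $i$,
$$(\textbf{n}_r^*)^T_{u_i}=-k\,\textbf{X}_{u_i}.$$

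Next I would plug this into (\ref{dk}). Differentiating the right-hand side by the product rule yields $[(\textbf{n}_r^*)^T_{u_i}]_{u_j}=-k_{u_j}\textbf{X}_{u_i}-k\,\textbf{X}_{u_iu_j}$, and symmetrically with $i$ and $j$ interchanged. Since the mixed ambient partials satisfy $\textbf{X}_{u_iu_j}=\textbf{X}_{u_ju_i}$, the second-order terms are identical in the two expressions and cancel once (\ref{dk}) is imposed, leaving $k_{u_j}\textbf{X}_{u_i}=k_{u_i}\textbf{X}_{u_j}$ for all $i,j\in\{1,\dots,n-1\}$. Because $\textbf{X}$ is an embedding the vectors $\textbf{X}_{u_1},\dots,\textbf{X}_{u_{n-1}}$ are linearly independent, so for any pair $i\neq j$ the relation $k_{u_j}\textbf{X}_{u_i}-k_{u_i}\textbf{X}_{u_j}=0$ forces $k_{u_i}=k_{u_j}=0$. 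As $\dim M=n-1\geq 2$, every index admits a partner $j\neq i$, hence $dk=0$; connectedness of $M$ then makes $k$ a constant, and putting $\alpha=-k$ gives $A_p^{\textbf{n}_r^*}=-\alpha\,\text{id}$ as claimed.

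The computation itself is routine; the one step that carries the argument is the cancellation of the $\textbf{X}_{u_iu_j}$ terms, which is exactly what hypothesis (\ref{dk}) is designed to furnish. I do not anticipate a genuine obstacle once (\ref{dk}) is available, since the conclusion follows from the product rule and the linear independence of the $\textbf{X}_{u_i}$; the only point to watch is that $\dim M\geq 2$ is needed so that each index has a partner, which is guaranteed for a codimension-two surface in $\Bbb R_1^{n+1}$.
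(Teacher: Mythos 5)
Your proposal is correct and follows essentially the same route as the paper's own proof: translate $\textbf n_r^*$-umbilicity into $(\textbf n_r^*)^T_{u_i}=-k\,\textbf X_{u_i}$, differentiate, cancel the symmetric terms $\textbf X_{u_iu_j}=\textbf X_{u_ju_i}$ using (\ref{dk}), invoke linear independence of the $\textbf X_{u_i}$ to get $dk=0$, and use connectedness to conclude $k$ is constant. Your explicit observation that one needs $\dim M\geq 2$ so that every index $i$ has a partner $j\neq i$ is a hypothesis the paper uses only tacitly, so flagging it is a small improvement rather than a divergence.
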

\begin{proof}
By the assumption, we have
$$ (\textbf n_r^+)_{u_i}^T=\alpha \textbf X_{u_i},\ \ i=1,2,\ldots n-1.$$
Therefore, for every $i, j \in\{1,2,\ldots, n-1\}$
$$[(\textbf n_r^*)^T_{u_i}]_{u_j}=\alpha_{u_j}\textbf X_{u_i}+\alpha \textbf X_{u_iu_j},$$
and
$$[(\textbf n_r^*)^T_{u_j}]_{u_i}=\alpha_{u_i}\textbf X_{u_j}+\alpha \textbf X_{u_ju_i}.$$
Since $[(\textbf n_r^*)^T_{u_i}]_{u_j}=[(\textbf n_r^*)^T_{u_j}]_{u_i}$ and $\textbf X_{u_iu_j}=\textbf X_{u_ju_i},$ we have
$$\alpha_{u_i}\textbf X_{u_j}-\alpha_{u_j}\textbf X_{u_i}=0;$$
and hence  $\alpha_{u_i}=\alpha_{u_j}=0$ because  $\textbf X_{u_i},\textbf X_{u_j}$ are linear independent;  and therefore $\alpha$ is constant because $M$ is  connected.
\end{proof}

\section{Examples}
We construct some concrete examples to illustrate the above results.

\begin{example} This example shows that there exists an $\textbf n_r^*$-umbilic surface but not totally umbilic.

  Let $M$ be a parametric surface in $\mathbb \Bbb R^4_1$, defined by  the parametric equation
$$\textbf X(u,v)=\left(\frac{1}{2}u^2,au-\frac{1}{2}u^2,u^2+v^2,u\right),\ v>0,\ u>1,\ a=\sqrt 3-1$$

A direct computation shows that $\textbf X$ is spacelike and
$$\textbf n_a^-=\left(1,1,0, a\right),$$
$$\textbf n_a^+=\left(\frac{-a^2+4ua-2u^2}{a^2-2ua+2u^2},\frac{a^2-2u^2}{a^2-2ua+2u^2},0,a  \right).$$

Since  $\textbf n_a^-$ is constant, $M$ is $\textbf n_a^-$-flat. We can check that $\textbf X\subset HP(\textbf n_a^-, 0).$

Calculating the first and the second fundamental forms (with respect to $\textbf n_a^+$) yields
$$(g_{ij})=\left(\begin{matrix}6u^2-2au+a^2-1&4uv\\4uv&4v^2 \end{matrix}\right),$$
and
$$(b_{ij}(\textbf n_a^+))=\begin{pmatrix}\frac{-2a^2+4au}{a^2-2au+2u^2}&0\\0&0 \end{pmatrix}.$$
Therefore, the principal curvatures $k_1^{\textbf n_a^-}$ and $k_2^{\textbf n_a^-}$ are the solutions of the following equation
$$\label{kho}4v^2\left(2u^2-2au+a^2-1\right)k^2-4v^2\left(\frac{-2a^2+4au}{a^2-2au+2u^2} \right)k  =0.$$
It is easy to see that $k_1^{\textbf n_a^+}=0$ and $k_2^{\textbf n_a^+}\ne 0.$ Thus, $M$ is not $\textbf n_a^+$-umbilic.

\end{example}

\begin{example}\label{exflat}\ This is an  example of a totally umbilic surfaces, but the curvature $\lambda$ is not constant.

Consider the equidistance hypersurface in $H_+^3(0,1)$
$$M=H_+^3(0,1)\cap \{x_1=0\}=\textbf X(\mathbb R^2)$$
defined by
$$\textbf X(u,v)=(0,u,v,\sqrt{u^2+v^2+1});\ (u,v)\in\mathbb R^2.$$

A direct computation yields
$$\textbf X_u=\left(0,1,0,\frac{u}{\sqrt{u^2+v^2+1}}\right),\ \textbf X_v=\left(0,0,1,\frac{v}{\sqrt{u^2+v^2+1}}\right);$$
$$g_{11}=\frac{v^2+1}{u^2+v^2+1},\ \ \ g_{12}=g_{21}=\frac{-uv}{u^2+v^2+1},\ \ \ g_{22}=\frac{u^2+1}{u^2+v^2+1};$$

$$\textbf n_r^{\pm}=\left(\pm\sqrt{\frac{r^2}{u^2+v^2+1}+2r},\frac{ru}{\sqrt{u^2+v^2+1}},\frac{rv}{\sqrt{u^2+v^2+1}},r\right);$$

$$\textbf X_{uu}=\left(0,0,0,\frac{v^2+1}{(u^2+v^2+1)^{3/2}}\right),\textbf X_{vv}
=\left(0,0,0,\frac{u^2+1}{(u^2+v^2+1)^{3/2}}\right),$$
$$\textbf X_{uv}=\left(0,0,0,\frac{-uv}{(u^2+v^2+1)^{3/2}}\right);$$

$$(g_{ij})=\frac{1}{u^2+v^2+1}\left(\begin{matrix}v^2+1&-uv\\-uv&u^2+1\end{matrix}\right);\ (g_{ij})^{-1}=\left(\begin{matrix}u^2+1&uv\\uv&v^2+1\end{matrix}\right);$$

$$ (b_{ij}^{\textbf n_r^{\pm}})=\frac{-r}{(u^2+v^2+1)^{3/2}}\left(\begin{matrix}v^2+1&-uv\\-uv&u^2+1\end{matrix}\right);$$

$$(a_{ij}^{\textbf n_r^{\pm}})=(b_{ij}^{\textbf n_r^{\pm}})(g_{ij})^{-1}=\frac{-r}{\sqrt{u^2+v^2+1}}\left(\begin{matrix}1&0\\0&1\end{matrix}\right);$$

$$[(\textbf n_r^+)^T_{u}]_{v}=\left(0,\frac{-rv}{\sqrt{(u^2+v^2+1)^3}},0,\frac{-2ruv}{(u^2+v^2+1)^2}  \right);$$
$$[(\textbf n_r^+)^T_{v}]_{u}=\left(0,0,\frac{-ru}{\sqrt{(u^2+v^2+1)^3}},\frac{-2ruv}{(u^2+v^2+1)^2}\right).$$

We can see that $M$ is totally umbilic. Moreover, $$k_p^{\textbf n_r^{\pm}}=\frac{-r}{\sqrt{u^2+v^2+1}}$$
is not constant and $[(\textbf n_r^+)^T_{u}]_{v}\ne [(\textbf n_r^+)^T_{v}]_{u}$ (see Theorem \ref{theoum4}).
\end{example}

\begin{example}
This is an example of a $\nu$-umbilic but neither $\textbf n_r^+$- nor $\textbf n_r^-$-umbilic for any $r\in \Bbb R_+.$
Let $$\textbf X:(0,\frac{\pi}{2})\times(-\frac{\pi}{2},0)\rightarrow \mathbb R_1^4,\ \ \ \ (u,v)\mapsto (u,\sin v,v,\cos u).$$
A direct computation yields
$$\textbf X_u=(1,0,0,-\sin u),\ \ \ \textbf X_v=(0,\cos v,1,0);$$
$$ \textbf X_{uu}=(0,0,0,-\cos u),\ \ \textbf X_{uv}=\textbf X_{vu}=(0,0,0,0),\ \ \textbf X_{vv}=(0,-\sin v,0,0);$$
$$g_{11}=\langle \textbf X_u,\textbf X_u\rangle=\cos^2u>0,\ \ g_{12}=\langle \textbf X_u,\textbf X_v\rangle=0,\ \ g_{22}=\langle \textbf X_v,\textbf X_v\rangle=1+\cos^2v>0;$$

$$\textbf n_r^+=\left(-r\sin u,-\sqrt{\frac{r^2\cos^2u+2r}{1+\cos^2v}},\cos v\sqrt{\frac{r^2\cos^2u+2r}{1+\cos^2v}},r\right);$$
$$\textbf n_r^-=\left(-r\sin u,\sqrt{\frac{r^2\cos^2u+2r}{1+\cos^2v}},-\cos v\sqrt{\frac{r^2\cos^2u+2r}{1+\cos^2v}},r\right);$$

$$(b_{ij}^{\textbf n_r^{\pm}})=\begin{pmatrix}r\cos u&0\\0&\mp\sin v\sqrt{\frac{r^2\cos^2u+2r}{1+\cos^2v}}\end{pmatrix}; $$

$$(g_{ij})=\begin{pmatrix}\cos^2u&0\\0&1+\cos^2v\end{pmatrix}; $$

\begin{equation}\label{mtA}  (a_{ij}^{\textbf n_r^{\pm}})
=(b_{ij}^{\textbf n_r^{\pm}}).(g_{ij})^{-1}
=\begin{pmatrix}\frac{r}{\cos u}&0\\0&\mp\sin v\sqrt{\frac{r^2\cos^2u+2r}{(1+\cos^2v)^3}}\end{pmatrix};    \end{equation}

 \begin{equation}\label{dcc}  k_1^{\textbf n_r^{\pm}}(P)=\frac{r}{\cos u},\ k_2^{\textbf n_r^{\pm}}(p)=\mp\sin v\sqrt{\frac{r^2\cos^2u+2r}{(1+\cos^2v)^3}}.    \end{equation}

 At each point $p=x(u,v)\in M,$ let $\nu(p)= \text n_{r_{p}},$ where $r_{p}=\frac{2\sin^2v\cos^2 u}{(1+\cos^2v)^3-\cos^4u\sin^2v}.$ We can see that $\nu$ is a smooth normal vector field on $M$ and $M$ is $\nu$-umbilic but neither $\textbf n_r^+$- nor $\textbf n_r^-$-umbilic for any $r\in \Bbb R_+.$
\end{example}
\begin{example}\ 
 Let
$$\textbf X(u,v)=\left(u,\sin v,\cos v,\sqrt{2+u^2}\right),\quad u\in\mathbb R,\ \  v\in(-\pi/2,\pi/2). $$
Because $\langle \textbf X,\textbf X\rangle =-1,$ $M\subset H_+^4(0,1).$ A direct computation yields
$$\textbf X_u=\left(1,0,0,\frac{u}{\sqrt{u^2+2}}\right) ,\qquad \textbf X_v=\left(0,\cos v,-\sin v,0\right);  $$
$$g_{11}=\frac{2}{2+u^2},\ g_{12}=g_{21}=0,\ g_{22}=1;$$
$$\textbf n_r^{\pm}=\left(\frac{ru}{\sqrt{u^2+2}},\pm\sin v\sqrt{-\frac{u^2r^2}{u^2+2}+r^2+2r  },\pm\cos v\sqrt{-\frac{u^2r^2}{u^2+2}+r^2+2r  },r \right);$$
$$\textbf X_{uu}=\left(0,0,0,\frac{2}{(u^2+2)^{3/2}} \right),\ \ \textbf X_{uv}= \textbf X_{vu}= \left(0,0,0,0\right),\ \ \textbf X_{vv}=\left(0,-\sin v,-\cos v,0\right)$$

$$b_{11}^{\textbf n_r^{\pm}}=\frac{-2r}{(u^2+2)^{3/2}},\ b_{12}^{\textbf n_r^{\pm}}=0,\ b_{22}^{\textbf n_r^{\pm}}=\mp\sqrt{\frac{2r(u^2+r+2)}{u^2+2} }; $$

 $$k_1^{\textbf n_r^{\pm}}=\frac{-r}{\sqrt{u^2+2}},\ \  k_2^{\textbf n_r^{\pm}}=\mp\sqrt{\frac{2r(u^2+r+2)}{u^2+2}}.$$

We can see that $k_1^{\textbf n_r^+}> k_2^{\textbf n_r^+}$ while $k_1^{\textbf n_r^-}< k_2^{\textbf n_r^-}$ for any $r>0.$ Thus, $M$ is not $\nu$-umbilic, for any normal vector field $\nu\ne\textbf X.$

\end{example}


\bigskip
\address{ Dang Van Cuong\\
Hue Geometry Group\\
Departement of Mathematics \\
Duy Tan University \\
Danang \\
Vietnam
}
{cuongdangvan@gmail.com}
\address{Doan The Hieu\\
Hue Geometry Group\\
Departement of Mathematics \\
College of Education, Hue University \\
Hue \\
Vietnam
}
{dthehieu@yahoo.com}

\end{document}